\documentclass[11pt]{amsart}
\usepackage{amsmath} 
\usepackage[normalem]{ulem}
\usepackage{amsthm}
\usepackage{amssymb} 
\usepackage{amsfonts}
\usepackage{graphicx} 
\usepackage{color,comment}
\usepackage{soul,xcolor}
\usepackage{ccaption}
\usepackage[shortlabels]{enumitem}
\usepackage{url}
\setlist[enumerate]{label=\normalfont{(\roman*)}}

\newtheorem{theorem}{Theorem}[section]
\newtheorem{lemma}[theorem]{Lemma}

\newtheorem{corollary}[theorem]{Corollary}

\theoremstyle{definition}

\newtheorem{definition}[theorem]{Definition}

\newtheorem{remark}[theorem]{Remark}

\setstcolor{red}

\let\int\relax
\newcommand{\int}{\mathring}

\usepackage[margin=1.5in]{geometry}

\title[Gluck twist and unknotting of satellite $2$-knots]{Gluck twist and unknotting of satellite $2$-knots}
\author[Seungwon Kim]{Seungwon Kim}
\begin{document}

\begin{abstract}
In this paper, we show that the Gluck twist of certain satellite $2$-knots in a $4$-manifold do not change the diffeomorphism type in three different ways: one is directly from the definition of the satellite $2$-knot, and the other two are by finding an equivalent description of the satellite $2$-knot. Furthermore, using the new description, we gave infinite number of new examples of $2$-knots which are unknotted by connected summing a single standard real projective plane. 
\end{abstract}

\maketitle

\section{Introduction}
Let $K$ be an embedded $2$-sphere in a $4$-manifold $X$ with a product neighborhood. Consider an operation which cuts a neighborhood $\nu(K)$ of $K$ and glues it back in a different way. By Gluck \cite{gluck1962embedding}, there are only two ways to glue it back, one is just the trivial gluing, and the other is called the \emph{Gluck twist}. Gluck \cite{gluck1962embedding} showed that the Gluck twist of $S^4$ is a homotopy $4$-sphere, which might give a potential counterexample to the smooth $4$-dimensional Poincar\'e conjecture. (See Kirby's problem 4.23 \cite{kirbyproblems}.)
 
Many knotted spheres (i.e., $2$-knots) in $S^4$ are known to be Gluck twist trivial, such as ribbon $2$-knots \cite{gluck1962embedding,yanagawa1969ribbon}, twist spun knots \cite{gordon1976foursphere, paononlinear}, certain union of two ribbon disks \cite{NashStipsicz}, twist roll spun knots \cite{naylor2020gluck}, tube sums of all such $2$-knots \cite{habiro2000gluck}, and more generally, $2$-knots 0-concordant to all such knots \cite{Melvin, Sunukjianconcordanceisotopysurgery}. 


In this paper, we consider the Gluck twist problem of a \emph{satellite $2$-knot}, which is defined below:

\begin{definition}
Let $P$ and $C$ be $2$-knots embedded in $S^4$ and a $4$-manifold $X$ respectively. Assume $C$ has a product neighborhood in $X$. Consider a simple loop $\gamma \subset S^4-\nu(P)$. Then there exists a diffeomorphism $\rho : \overline{S^4 - \nu(\gamma)} \rightarrow \nu(C)$, where $\nu(\cdot)$ denotes a neighborhood of $\cdot$ in a $4$-manifold. Let $K = \rho(P) \subset X$. We call $K$ the \emph{satellite $2$-knot in $X$ of companion $C$ with pattern $(P,V)$}. Equivalently, 
$$(X, K) = ((\overline{X - \nu(C))}\bigcup_{\partial \rho}(\overline{S^4 - \nu(\gamma)}), P),$$ where
$$\partial \rho = \rho\restriction_{\partial(\overline{S^4 - \nu(\gamma)})}\; : \partial(\overline{S^4 - \nu(\gamma)}) \rightarrow \partial \nu(C) \simeq \partial (\overline{X - \nu(C)})$$
 and 
$$P \subset S^4 - \nu(\gamma) \subset (\overline{X - \nu(C))}\bigcup_{\partial \rho}(\overline{S^4 - \nu(\gamma)}) \simeq X.$$
 We say a satellite $2$-knot is degree $n$ if $[P] = n \in H_2(S^4 - \nu(\gamma)) \simeq \mathbb{Z}$.
Especially, we call a satellite $2$-knot is a \emph{$cable$ $2$-knot} if its pattern $2$-knot $P$ is the unknot.
\end{definition}

In \cite{hughes2020isotopies}, Hughes, Miller and the author studied the Gluck twist of a satellite $2$-knot and showed that when the pattern is 0-concordant to a tube sum of twist spun knots and the degree in $H_2(\overline{S^4 - \nu(\gamma)})\simeq H_2(S^2 \times D^2) \simeq \mathbb{Z}$ is zero, then the Gluck twist along the satellite $2$-knot is trivial. Furthermore, if the degree is one, then the Gluck twist of the satellite $2$-knot is the same as the Gluck twist along its companion. 
  
In this paper, we extend the above result to a more general setting:

\begin{theorem}\label{main}
Let $K$ be a satellite $2$-knot in a $4$-manifold $X$ of companion $C$ with pattern $(P,V)$. Then the following holds:
\begin{enumerate}
    \item If the degree of $K$ is even, then the Gluck twist of $X$ along $K$ is diffeomorphic to the Gluck twist of $X$ along $P \subset D^4 \subset X$ .
    \item If the degree of $K$ is odd then the Gluck twist of $X$ along $K$ is diffeomorphic to the Gluck twist of $X$ along $C \# P$.
\end{enumerate}
\end{theorem}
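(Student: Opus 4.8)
The plan is to reduce the statement to understanding the effect of the Gluck twist on the ``satellite piece'' $W:=\overline{S^4-\nu(\gamma)}\cong S^2\times D^2$, and then to reassemble. \emph{Localization.} Since $K=\rho(P)$ lies inside $\rho(W)=\nu(C)$, the Gluck twist of $X$ along $K$ is supported in $\nu(C)$, so $X_K=\overline{X-\nu(C)}\cup_{\partial\rho}W_P$, where $W_P$ is the result of deleting a tubular neighbourhood of $P$ from $W$ and regluing by the Gluck map $\tau$ (the gluing still uses $\partial\rho$, as $\tau$ fixes $\partial W$); equivalently $W_P=\overline{S^4_P-\nu(\gamma)}$ with $S^4_P$ the Gluck twist of $S^4$ along the $2$-knot $P$. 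I would also record two routine identifications of the targets appearing in the statement: the Gluck twist of $X$ along $P\subset D^4\subset X$ is $X\#S^4_P$, and the Gluck twist of $X$ along $C\#P$ is $X_C\#S^4_P$ (a $2$-knot summand sitting in a ball splits off, under a Gluck twist, as an interior connected summand $S^4_P$). With this, the theorem becomes: $X_K\cong X\#S^4_P$ if the degree is even, and $X_K\cong X_C\#S^4_P$ if the degree is odd.

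\emph{The key lemma.} Let $F=S^2\times\{0\}\subset S^2\times D^2$ be the core sphere and let $W_F$ denote the Gluck twist of $S^2\times D^2$ along $F$; it is abstractly $S^2\times D^2$, but only via a diffeomorphism restricting to $\tau$ (not the identity) on the boundary, hence is not $S^2\times D^2$ rel boundary. The claim is that, \emph{via a diffeomorphism that is the identity on the boundary}, $W_P\cong S^4_P\#(S^2\times D^2)$ if $d:=\deg P$ is even and $W_P\cong S^4_P\#W_F$ if $d$ is odd, all connected sums taken in the interior. Granting this, the theorem follows at once, since interior connected summands pass through the gluing $\cup_{\partial\rho}$, and $\overline{X-\nu(C)}\cup_{\partial\rho}W=X$ while $\overline{X-\nu(C)}\cup_{\partial\rho}W_F=X_C$.

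\emph{Proof of the key lemma, and the main obstacle.} Because $\gamma$ is unknotted in $S^4$ it bounds an embedded disk $\Delta\subset S^4$, which we may take transverse to $P$ with algebraic intersection number $\Delta\cdot P=\mathrm{lk}(\gamma,P)=d$. I would rebuild $\Delta$ inside $S^4_P$: the disk meets $\nu(P)\cong S^2\times D^2$ in meridian disks of $P$, and after $\nu(P)$ is reglued by $\tau$ each such meridian becomes a $\tau$-twisted meridian, which bounds an (essentially unique, since $S^2\times D^2$ is simply connected) disk in the new copy of $S^2\times D^2$; splicing these in yields an embedded disk $\Delta'\subset S^4_P$ with $\partial\Delta'=\gamma$. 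A regular neighbourhood $B'$ of $\Delta'$ is a $4$-ball with $\gamma\subset B'$ and $\overline{B'-\nu(\gamma)}\cong(S^2\times D^2)\setminus\mathring{B}^4$, so $W_P=\overline{S^4_P-\nu(\gamma)}=(S^4_P\setminus\mathring{B}')\cup_{S^3}\overline{B'-\nu(\gamma)}\cong S^4_P\#(S^2\times D^2)$. The delicate point --- which I expect to be the real obstacle --- is to pin down the induced parametrization of $\partial W_P=\partial\nu(\gamma)$: I would check, in the local model $\nu(P)=S^2\times D^2$ with $\tau(x,\theta)=(r_\theta x,\theta)$, that it differs from the standard parametrization exactly by $\tau^{\,d}$ --- one power of $\tau$ per intersection point of $\Delta$ with $P$, with sign the local intersection sign and with the capping disks contributing nothing further --- and then invoke the fact that $\tau^{2}$ is isotopic to $\mathrm{id}_{S^2\times S^1}$ because $[\tau]$ generates $\pi_1(SO(3))=\Z/2$. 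Thus only $d\bmod 2$ survives: for $d$ even the gluing is standard and $W_P\cong S^4_P\#(S^2\times D^2)$ rel boundary, for $d$ odd it is $\tau$-twisted and the $S^2\times D^2$ summand becomes $W_F$. Two minor remarks: no Whitney trick is needed, since only the algebraic count $d$ enters (it is irrelevant whether $\Delta$ meets $P$ in the minimal geometric number of points), and the case $d<0$ is handled by reversing an orientation, using $S^4_{\bar P}\cong S^4_P$.
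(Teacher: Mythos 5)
Your proposal is correct and is essentially the paper's first proof recast in intrinsic language: both arguments localize the Gluck twist to the satellite piece $\overline{S^4-\nu(\gamma)}$ via the same decomposition, and both reduce the theorem to showing that the framing of $\gamma$, valued in $\pi_1(SO(3))=\Z/2$, differs from the standard one by the degree mod $2$ --- the paper by isotoping $\gamma$ across a $+1$-framed meridian of $P$ in a Kirby diagram (framing jumps by $1$ per pass), you by comparing the framings induced by the original spanning disk $\Delta$ and the rebuilt disk $\Delta'$ (one power of $\tau$ per intersection point with $P$), which is the same local computation. One small remark: your ``routine'' identification of the Gluck twist of $X$ along $C\# P$ with $X_C\# S^4_P$ quietly uses the Melvin/Habiro--Kanenobu--Shima result on Gluck twists of tube sums, whereas the paper sidesteps this by citing Kanenobu's identification of the degree-one satellite with $C\# P$ itself.
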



Theorem \ref{main} has the following immediate corollary:

\begin{theorem}\label{corollary}
Let $K$ be a satellite $2$-knot in a $4$-manifold $X$ of companion $C$ with pattern $(P,V)$. Suppose that the Gluck twist of $S^4$ along $P$ is trivial. Then the following holds:
\begin{enumerate}
    \item If the degree of $K$ is even, then the Gluck twist of $X$ along $K$ is diffeomorphic to $X$.
    \item If the degree of $K$ is odd and the Gluck twist of $X$ along $C$ is trivial, then the Gluck twist of $X$ along $K$ is diffeomorphic to $X$.
\end{enumerate}
\end{theorem}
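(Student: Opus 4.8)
The plan is to deduce the statement from Theorem~\ref{main} together with the fact that the Gluck twist is a local operation. Throughout, write $\mathrm{Gl}_S M$ for the result of Gluck twisting a $4$-manifold $M$ along a $2$-knot $S \subset M$ with trivial normal bundle. Since this operation only alters a tubular neighbourhood $\nu(S) \cong S^2 \times D^2$, if $S$ lies inside a smoothly embedded $4$-ball $D^4 \subset M$ then
\[
\mathrm{Gl}_S M \;\cong\; \bigl(M \setminus \mathring{D}^4\bigr)\,\cup_{S^3}\, W_S ,
\]
where $W_S := \mathrm{Gl}_S D^4$ is the compact $4$-manifold with $\partial W_S = S^3$ produced by performing the twist inside $D^4$, and the two copies of $S^3$ are identified by an orientation-preserving diffeomorphism.

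We treat part (1) first. By Theorem~\ref{main}(1) the Gluck twist of $X$ along $K$ is diffeomorphic to $\mathrm{Gl}_P X$, where $P$ sits inside a $4$-ball $D^4 \subset X$. Writing $S^4 = D^4 \cup_{S^3} D^4$ with $P$ in the first summand, the hypothesis $\mathrm{Gl}_P S^4 \cong S^4$ says precisely that $W_P \cup_{S^3} D^4 \cong S^4$, i.e.\ that $W_P$ is the complement of a smoothly embedded $4$-ball in the standard $S^4$; hence $W_P \cong D^4$ by uniqueness of smoothly embedded $4$-balls up to ambient isotopy. Since the resulting gluing diffeomorphism of $S^3$ is orientation-preserving, hence isotopic to the identity because $\pi_0\mathrm{Diff}^+(S^3)$ is trivial, we obtain $\mathrm{Gl}_P X \cong \bigl(X \setminus \mathring{D}^4\bigr) \cup_{S^3} D^4 = X$. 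This proves (1).

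For part (2), Theorem~\ref{main}(2) reduces the claim to showing $\mathrm{Gl}_{C \# P} X \cong X$. Here the connected sum is taken with the \emph{localized} pair $(S^4,P)$: a ball about a point of $C$ is replaced by $(S^4,P)$ minus a ball, so the $P$-summand of $C \# P$, and all the twisting it contributes, is confined to a $4$-ball. Using the additivity of the Gluck twist under connected sum --- the mechanism underlying the tube-sum results of \cite{habiro2000gluck} --- we get $\mathrm{Gl}_{C \# P} X \cong \bigl(\mathrm{Gl}_C X\bigr) \,\#\, \bigl(\mathrm{Gl}_P S^4\bigr)$, the connected sum being formed at points of the still-embedded cores $C \subset \mathrm{Gl}_C X$ and $P \subset \mathrm{Gl}_P S^4$. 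Substituting $\mathrm{Gl}_P S^4 \cong S^4$ and then the standing hypothesis $\mathrm{Gl}_C X \cong X$ gives $\mathrm{Gl}_{C \# P} X \cong X \# S^4 = X$, proving (2).

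The main obstacle is the connected-sum formula invoked in (2): one must verify that the Gluck re-gluing diffeomorphism of $\partial\nu(C \# P) \cong S^2 \times S^1$ can be chosen to respect the decomposition of $C \# P$ into its $C$-disk and its $P$-disk, so that the re-gluing factors as a twist along the localized $P$-part --- which by the argument of part (1) merely reconstitutes a standard $4$-ball summand --- followed by a twist along $C$. The remaining ingredients, namely the locality of the Gluck twist, the uniqueness of smoothly embedded $4$-balls in $S^4$, and the triviality of $\pi_0\mathrm{Diff}^+(S^3)$, are standard, so everything beyond that point is bookkeeping.
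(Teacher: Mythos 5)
Your proposal is correct and follows essentially the same route as the paper, which states this result as an ``immediate corollary'' of Theorem~\ref{main} without further argument: part (1) is the standard locality-plus-disc-theorem observation, and part (2) rests on the connected-sum/tube-sum behaviour of the Gluck twist from \cite{habiro2000gluck, Melvin}, which the paper itself invokes in the second proof of Theorem~\ref{main}. The one step you flag as the main obstacle --- that Gluck twisting along $C \# P$ agrees with Gluck twisting along the split union of $C$ and $P$ --- is exactly the cited result of those references, so nothing further is needed.
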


Hence, for example, the Gluck twist of $S^4$ along a satellite $2$-knot of a twist spun $2$-knot companion with a twist spun $2$-knot pattern is trivial.
%

We give three different proofs. The first proof is directly from the definition of the satellite $2$-knot, and the other two are by finding an equivalent description of the satellite $2$-knot. Especially, third proof is done by unknotting cable $2$-knots using a standard real projective plane.

Viro \cite{viro1973local} found the first example of a $2$-knot in $S^4$ which can be unknotted by connected summing a single standard real projective plane. As far as the author is aware, there were no previously known example of non-ribbon $2$-knots which can be unknotted by connected summing with a single standard real projective plane.

In this paper, We extend the Viro's result to non-ribbon $2$-knots.

\begin{theorem}\label{extensionviro}
There exist an infinite number of non-ribbon $2$-knots which are unknotted by connected summing a single standard real projective plane.
\end{theorem}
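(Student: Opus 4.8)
The plan is to exhibit the required $2$-knots as cable $2$-knots and then unknot them by connected-summing a standard $\RP^2$, following the structure of the third proof of Theorem \ref{main}. Concretely, I would take the companion $C$ to range over an infinite family of twist spun knots in $S^4$ (say $\tau_k(T)$ for a fixed nontrivial knot $T$, or the family of twist spun torus knots), and take the pattern $P$ to be the unknot so that $K$ is a \emph{cable} $2$-knot of degree one. Since the Gluck twist of $S^4$ along a twist spun knot is trivial, Theorem \ref{corollary}(ii) already guarantees that the Gluck twist of $S^4$ along each such $K$ is $S^4$; the point of Theorem \ref{extensionviro}, however, is the stronger statement that $K \# \RP^2$ is the unknotted $\RP^2$ in $S^4$.

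The key steps, in order: (1) Recall from the third proof the equivalent description of a degree-one cable $2$-knot: cutting out $\nu(\gamma)$ and regluing via $\rho$ realizes $K$ as the image of the unknot $P$ inside a copy of $S^4$ reglued to $\overline{S^4 - \nu(C)}$, and one shows this operation, after connected summing with a standard $\RP^2$, absorbs the companion's complication into the nonorientable handle. Here I would make precise that connected-summing with the standard $\RP^2$ introduces a neighborhood that is a punctured $\RP^2$-bundle whose complement contains an embedded $2$-sphere on which one can perform the relevant isotopy/handle-slide. (2) Use the fact that the $\RP^2 \# K$ has a Gluck-twist-style reinterpretation — connect-summing a standard $\RP^2$ has the effect of killing the meridian $\gamma$ up to isotopy — to slide the companion $C$ across the nonorientable $2$-handle and cancel it, reducing $K \# \RP^2$ to $U \# \RP^2$ where $U$ is the unknot. (3) Finally, verify that the family is infinite and consists of non-ribbon $2$-knots: a cable $2$-knot of degree one over a non-ribbon companion is itself non-ribbon (e.g.\ by a fundamental-group or Alexander-module obstruction inherited from $C$), and distinct companions in the chosen family yield non-equivalent cable $2$-knots (again detected by $\pi_1$ of the complement or by the Farber--Levine pairing), so we obtain infinitely many.

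I expect the main obstacle to be step (2): making rigorous the claim that connect-summing the \emph{standard} $\RP^2$ — as opposed to some knotted $\RP^2$ — suffices to unknot $K$. The subtlety is that $\RP^2 \# K$ lives in $\#\RP^2$-style ambient reasoning only at the level of the $2$-knot, while the ambient manifold stays $S^4$; one must track carefully that the nonorientable handle's core circle is isotopic to the meridian $\gamma$ used in the satellite construction, so that regluing along $\gamma$ (which is what produced $K$ from $C$) becomes an isotopy once that circle bounds the Möbius-band-cocore. This is exactly the mechanism in Viro's original example for ribbon knots, and the work is to check it persists when $C$ is a twist spun knot rather than a ribbon knot — which should follow since twist spun knots, like ribbon knots, have complements built without $3$-handles, so the relevant handle cancellation goes through. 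A secondary, more routine obstacle is confirming non-ribbonness and mutual inequivalence of the family; this is standard once the companions are chosen to have pairwise distinct Alexander polynomials or fundamental groups.
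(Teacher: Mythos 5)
There is a genuine gap, and it is fatal to the approach rather than a fixable technicality. You take the pattern to be the unknot and the degree to be \emph{one}, so your $K$ is a degree-one cable of $C$ --- which, by Lemma \ref{cableshape}, is just a single parallel copy of $C$, i.e.\ $K$ is isotopic to $C$ itself. Your claim in step (2) is therefore equivalent to asserting that $\tau_k(T)\# R$ is the standard $\RP^2$ for every twist spun knot, and no mechanism you describe establishes this. The actual unknotting mechanism (Theorem \ref{nonorientable}) is parity-sensitive: $K\# R$ is isotopic to $R$ only when the degree is \emph{even}, because the half-twisted band coming from $\# R$ untwists the half-twisted tubes joining the parallel copies of $C$, after which the copies cancel in \emph{adjacent pairs} (each pair, joined by an untwisted tube, cobounds $S^2\times I$ minus a $3$-ball and hence bounds a $3$-ball). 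With an odd number of copies one copy of $C$ survives and you get $K\#R \simeq C\# R$, not $R$. Your heuristic that ``connect-summing a standard $\RP^2$ kills the meridian $\gamma$ so the companion can be slid off and cancelled'' has no justification and would, if true, prove the far stronger (and unproven) statement that every $2$-knot with trivial Gluck twist is unknotted by one $\RP^2$.

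The paper instead fixes a single companion $\tau_2(k)$ with $k$ a $2$-bridge knot and varies the \emph{even} degree $2n$. Unknotting by one $\RP^2$ is then exactly Theorem \ref{nonorientable} in the even case. Non-ribbonness does not follow from a generic ``inherited from $C$'' obstruction as you suggest; it uses fiberedness: by Zeeman and Kanenobu the $2n$-cable of $\tau_2(k)$ is fibered by a punctured $\#_{2n}L(p,q)$, Cochran's theorem forces the fiber of a fibered ribbon $2$-knot to be a punctured $\#_q S^1\times S^2$, and Plotnick rules out $S^1\times S^2$ summands in cyclic branched covers. The same fibers, read off from the infinite cyclic cover, distinguish the knots for distinct $n$ and give infiniteness. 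Your step (3) as written (distinguishing by $\pi_1$ or Alexander invariants of varying companions) is not available once you are forced into the even-degree setting with a fixed companion, so the distinguishing argument must also be replaced.
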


%
%
%



\subsection*{Acknowledgements}
The author would like to thank Hongtaek Jung, Maggie Miller, Jason Joseph and Hannah Schwartz for many helpful conversations about the earlier draft. The author was supported by the Institute for Basic Science (IBS-R003-D1) at the time of this project.

\section{Proofs of Main Theorem}

\begin{proof}[First proof of Theorem \ref{main}]
Consider a banded unlink diagram of $P$. $\gamma$ can be isotoped so that it can be seen as the unknot in the banded unlink diagram of $P$. Note that the obvious disk bounded by $\gamma$ intersects the banded unlink diagram in $n$ times where $n \equiv [P]\pmod{2}$. Without loss of generality, we also can assume that the obvious disk does not intersect the bands.

We can think of the Gluck twist in the following way: First, get the natural handle decomposition from a banded unlink diagram of a $2$-knot in a $4$-manifold. Then, we add the $+1$-framed circle to a meridian of one of the dotted circles. Then, this Kirby diagram represents the Gluck twist of the given $4$-manifold along the $2$-knot with the given banded unlink diagram. See Figure \ref{examplebud} for an example.

\begin{figure}
\centering
\includegraphics[width=\textwidth]{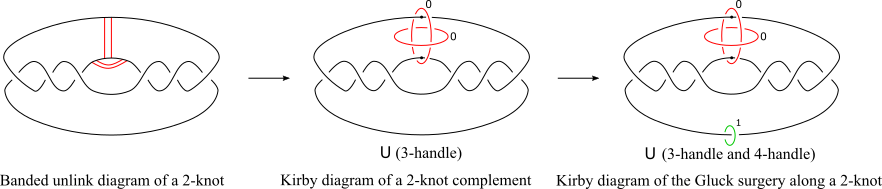}
\caption{First figure is a banded unlink diagram of a spun trefoil in $S^4$. From this banded unlink, we can get the natural handle decomposition of its complement, by putting a dot on each unlink, and changing each band to a $0$-framed circle as in the second figure. If we put a $+1$-framed circle to a meridian of one of the dotted circles, then we get a Kirby diagram of the Gluck twist of $S^4$ along the spun trefoil knot.}
\label{examplebud}
\end{figure}

Also, the Gluck twist along $K$ can be thought as follows: We first do the Gluck twist $\overline{S^4 - \nu(\gamma)}$ along $P$ and glue it to $X - \nu(C)$ without twist. See the below equations.\\

$$(\overline{X - \nu(K)}) \bigcup_{\phi} (S^2 \times D^2) = \Big(\overline{\big((\overline{X - \nu(C)}) \bigcup_{\partial \rho} (\overline{S^4 - \nu(\gamma)}) - \nu(P)}\Big) \bigcup_{\phi} (S^2 \times D^2)$$

$$=\big((\overline{X - \nu(C)}) \bigcup_{\partial \rho} (\overline{S^4 - \nu(\gamma) - \nu(P)})\big) \bigcup_{\phi} (S^2 \times D^2)$$

$$=(\overline{X - \nu(C)}) \bigcup_{\partial \rho}\big( (\overline{S^4 - \nu(\gamma) - \nu(P)}) \bigcup_{\phi} (S^2 \times D^2)\big).$$

Here, $\phi$ is the self-diffeomorphism of $S^2 \times S^1$ which gives the Gluck twist. Note that $\partial \rho$ is the identity map of $S^2 \times S^1$.

Consider an isotopy of $\gamma$ through the $+1$-framed circle. This isotopy will link $\gamma$ and the $+1$-framed circle. However, we can push $\gamma$ down along the gradient flow of the Morse function so that $\gamma$ is sitting inside the $1$-handle body, do an isotopy of $\gamma$ in the $1$-handlebody which does not touch the dotted circles, and push it back to the level of the Kirby diagram so that $\gamma$ is unlinked from $+1$-framed circle. See Figure \ref{isotopy} for the actual moves in a Kirby diagram, and Figure \ref{kirby} for an example. The $+1$-framed circle can be moved to any meridian of the dotted circles since it is isotopic to a meridian of $P$. Therefore, we can keep doing it so that $\gamma$ is unlinked from the every dotted circle and every attaching circle of the $2$-handles.

\begin{figure}
    \centering
    \includegraphics[width=\textwidth]{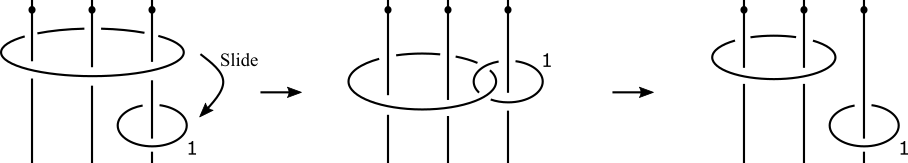}
    \caption{We slide the curve $\gamma$ through the $+1$-framed circle to get the figure in the middle from the figure in the left. Then, we push $\gamma$ below the index-$2$ critical points to sit on the $1$-handlebody. Then, we can do small isotopy which does not intersect the dotted circles. Then, we push $\gamma$ back to the original level to get the figure in the right.}
    \label{isotopy}
\end{figure}

\begin{figure}
\centering
 \includegraphics[width=\textwidth]{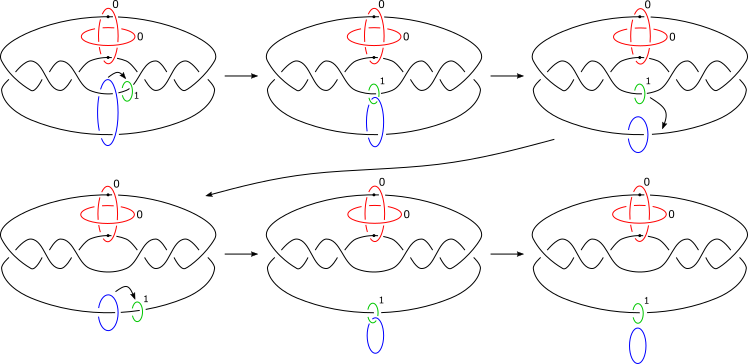}
 \caption{In the first figure, we slide the blue curve $\gamma$ along the $+1$-framed circle to get the second figure. In the second figure, we push $\gamma$ down to the level below the index-$2$ critical points, do small isotopy in that level, and push it back to the original level to get the third figure. We can always move the $+1$-framed circle to a meridian of any dotted circles, so we can move it like the fourth figure. Then we do the same moves to get the last figure.}   
\label{kirby}
\end{figure}

We can figure out the result of the Gluck twist if we can specify the framing of $\gamma$. Each time we push $\gamma$ through the $+1$-framed circle, framing changes by $1$. All the other isotopies such as pushing up and down along the gradient flow, and the isotopy in the level below the index-$2$ critical points which does not touch the dotted circles do not change the framing. Hence, the final framing of $\gamma$ differs from the original framing by the number of times that it passes through the $+1$-framed circle mod 2, since $\gamma$'s framing is either $0$ or $1$. This number is same as the parity of the degree of $K$. Hence, the Gluck twist of $X$ along $K$ is only depend on the degree of $K$. Hence, the Gluck twist of $X$ along an even degree satellite $2$-knot is diffeomorphic to the Gluck twist of $X$ along a degree zero satellite $2$-knot, which is just the pattern $2$-knot. Also, the Gluck twist of $X$ along an odd degree satellite $2$-knot is diffeomorphic to the Gluck twits of $X$ along a degree one satellite $2$-knot, which is just a connected sum of the companion and the pattern by \cite{kanenobu1983groups}. This proves our main theorem.


\end{proof}

\begin{definition}
Let $J$ and $K$ be disjoint two $2$-knots in a $4$-manifold $X$. Consider a $3$-ball $B = D^2 \times I$ embedded in $X - (J \cup K)$ such that $J \cap B = D^2 \times \{0\}$ and $K \cap B = D^2 \times \{1\}$. Consider a $2$-knot $L = (J-D^2 \times \{0\}) \cup (\partial D^2 \times I) \cup (K - D^2 \times \{1\}$. We call $L$ a \emph{tube sum of $J$ and $K$} and $L\cap B = \partial D^2 \times I$ a \emph{tube} connecting $J$ and $K$. We call the procedure to make $L$ from $J$ and $K$ a \emph{tubing} $J$ and $K$. See Figure \ref{tubeexample} for a banded unlink diagram of a tube.  
\end{definition}

\begin{figure}
\centering
\includegraphics[width=0.5\textwidth]{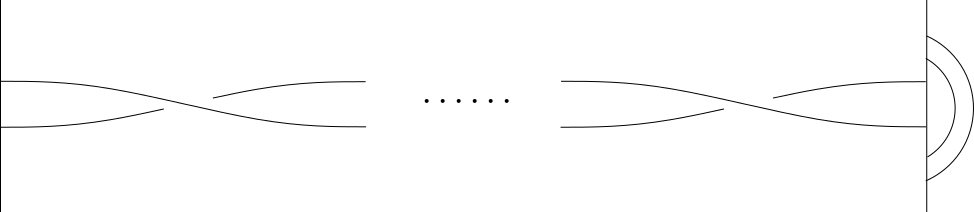}
\caption{A banded unlink diagram of a tube.}
\label{tubeexample}
\end{figure}

\begin{remark}
The tube sum is sometimes called the \emph{band sum} (e.g., \cite{hughes2020isotopies}), since it is a higher dimensional analogue of the band sum of $1$-knots. It is a generalization of the connected sum.
\end{remark}

\begin{definition}
Let $C$ be a $2$-knot in a $4$-manifold $X$ with $\nu(C) \simeq S^2 \times D^2$. Assume $C = S^2 \times \{0\}$. \emph{Parallel copies of $C$} are disjoint union of $2$-spheres $\bigcup_i S^2 \times \{p_i\} \subset \nu(C) \subset X$ (see Figure \ref{parallelcopies} for a banded unlink diagram in $S^2 \times D^2$, represented by an exterior of $\gamma \in S^4$). 
\end{definition}

\begin{figure}
\centering
\includegraphics[width=0.3\textwidth]{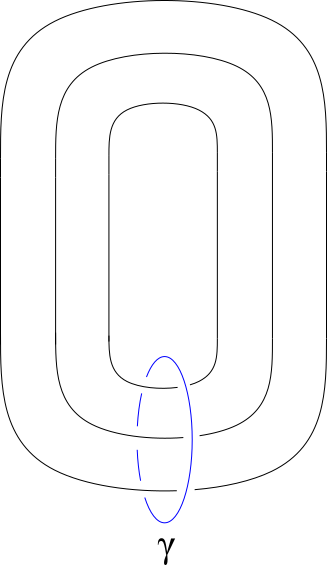}
\caption{A banded unlink diagram of parallel copies of a $2$-knot $C$. This figure is drawn in $S^2 \times D^2 \simeq \nu(C)$, represented by an exterior of $\gamma \in S^4$.}
\label{parallelcopies}
\end{figure}

\begin{definition}
Let $\bigcup_i C_i$, $i= 1, \cdots, n$ be a set of parallel copies of a $2$-knot $C$ in a $4$-manifold $X$. Without loss of generality, assume that $D^2$ is a unit disk in $\mathbb{C}^2$, $C_i = S^2 \times \{p_i\}$, where $p_i \in [-1, 1] \subset D^2$, and if $i< j$ then $p_i < p_j$. Let $\delta = \{q_j\} \times [p_j, p_{j+1}] \subset S^2 \times [p_j, p_{j+1}]$ be an arc which connects $C_j$ and $C_{j+1}$. Consider an $\epsilon$-neighborhood of $\delta$ in $X$ and a tube in that neighborhood connecting $C_j$ and $C_{j+1}$ which its core is $\delta$. We call a tube is \emph{untwisted}, if a tube is $\partial D_{\epsilon} \times [p_j, p_{j+1}]$, where $D_{\epsilon} \subset S^2$ is a small disc neighborhood of $q_j$. Otherwise, we call a tube is \emph{half-twisted}. See Figure \ref{tubes} for their banded unlink diagrams.
\end{definition}

\begin{figure}
\centering
\includegraphics[width = 0.6\textwidth]{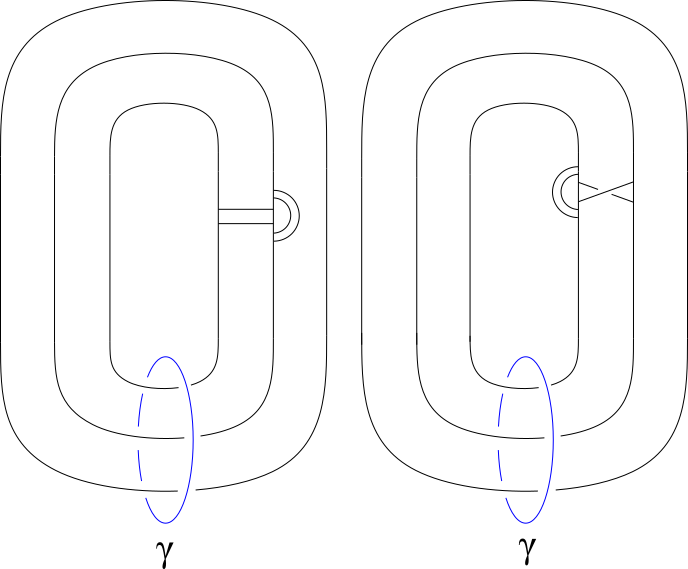}
\caption{Left: A banded unlink diagram of untiwsted tube. Right: A banded unlink diagram of half-twisted tube.}
\label{tubes}
\end{figure}

\begin{remark}
A banded unlink diagram of a tube can be twisted many times as in Figure \ref{tubeexample}, but we always can cancel full twist by local isotopies of bands. Therefore, there is no ambiguity in the definition of half-twisted.
\end{remark}

\begin{lemma}\label{satellitelemma}
Every satellite $2$-knot $K$ is a tube sums of a pattern $2$-knot $P \subset D^4 \subset X$ and parallel copies of a companion $2$-knot $C$.
\end{lemma}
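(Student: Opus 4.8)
The plan is to put $K$ into a local normal form and then read off the tube-sum decomposition, most transparently from a banded unlink diagram of the pattern. Since every loop in $S^4$ is unknotted, $V := \overline{S^4 - \nu(\gamma)} \cong S^2 \times D^2$, and I may choose the product structure on $V$ so that $\rho$ carries the core sphere $S^2 \times \{0\}$ to $C$ and parallel copies $S^2 \times \{y\}$ of the core to parallel copies of $C$. It therefore suffices to prove that $P$, as a $2$-knot in $V$, is isotopic inside $V$ to a tube sum of a copy of $P$ contained in a $4$-ball $B \subset V$ with finitely many parallel copies of $S^2 \times \{0\}$; applying $\rho$ then transports this decomposition into $\nu(C) \subset X$ and gives the statement for $K = \rho(P)$.

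To set up the normal form I would take a disk $\Delta \subset S^4$ bounded by $\gamma$ and transverse to $P$, and isotope $\gamma$ so that in a banded unlink diagram of $P$ it appears as a small round circle in the middle level encircling a bundle of $m$ parallel strands of the unlink and disjoint from the bands; here $m$ is the geometric intersection number of $\Delta$ with $P$, so $m \equiv [P] \pmod 2$. Drilling $\nu(\gamma)$, the disk $\Delta$ becomes a properly embedded disk in $V$ that we may take to be a meridian disk $\{q_0\} \times D^2$ still meeting $P$ transversely in $m$ points. Taking a small disk $D_- \ni q_0$ in $S^2$ and setting $D_+ := \overline{S^2 - D_-}$, $B_\pm := D_\pm \times D^2$, I can isotope $P$ so that $P \cap B_-$ consists of exactly $m$ parallel product sheets $D_- \times \{y_1\}, \dots, D_- \times \{y_m\}$. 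Then $P^{\circ} := P \cap B_+$ is a copy of $P$ with $m$ trivial disks removed, lying in the $4$-ball $B_+$.

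Now each parallel copy of the core splits as $S^2 \times \{y_i\} = (D_- \times \{y_i\}) \cup (D_+ \times \{y_i\})$, so
$$P \;=\; P^{\circ} \;\cup\; \bigsqcup_{i=1}^{m}\bigl( (S^2 \times \{y_i\}) \setminus (D_+ \times \{y_i\}) \bigr),$$
and capping $P^{\circ}$ off inside $B_+$ by the trivial disks $D_+ \times \{y_i\}$ exhibits $P$ as the tube sum of a sphere $P_0 \subset B_+ \cong D^4$ with the $m$ parallel copies $S^2 \times \{y_i\}$, the connecting tubes being supported near $B_+$ and being untwisted or half-twisted according to the signs of the $m$ intersection points. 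In the banded unlink picture this is simply the statement that drilling the $m$-strand bundle and regluing by $\rho$ splices a copy of the companion's diagram into each of the $m$ strands, and that pushing those $m$ copies of $C$ together with their tubes away from the rest of the diagram leaves a copy of the pattern diagram, hence a copy of $P$, inside a ball.

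The step I expect to be the main (if routine) obstacle is making the normal form precise: that $\Delta$, once $\nu(\gamma)$ is removed, can be arranged to be a meridian disk of $V$; that $P$ can be isotoped to meet a neighbourhood of that disk in exactly $m$ product sheets; and — this is what guarantees that the ball summand $P_0$ is a genuine copy of the pattern $P$ rather than a modification of it — that filling $P^{\circ}$ by trivial disks on the $D_+$ side yields the same $2$-knot as the filling by the original sheets on the $D_-$ side, which comes down to uniqueness up to isotopy of trivial disk fillings of an unknot (and connectedness of $\mathrm{Diff}^+(S^2)$). Once this is in hand, matching the outcome with the definitions of tube sum and of half-twisted tube is routine, the only residual ambiguity — whether a connecting tube carries a full twist — being harmless by the remark following the definition of half-twisted tube. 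Finally, there is no need for $m$ to equal $[P]$: a larger geometric intersection number simply produces more parallel copies of $C$, which the statement permits.
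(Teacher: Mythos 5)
Your reduction to showing that $(P,V)$, with $V=\overline{S^4-\nu(\gamma)}\cong S^2\times D^2$, is a tube sum of a ball copy of $P$ with parallel copies of the core is the same first step as the paper's, and the normal form $P=P^{\circ}\cup\bigsqcup_i D_-\times\{y_i\}$ is fine. The genuine gap is disjointness of the claimed summands. A tube sum requires the spheres $P_0$ and $S^2\times\{y_i\}$ to be \emph{disjointly} embedded, with the tubes running in their complement; but $P^{\circ}=P\cap B_+$ is the whole pattern spread out over $B_+=D_+\times D^2$, and it will in general meet each disk $D_+\times\{y_i\}=(S^2\times\{y_i\})\cap B_+$ (and likewise the capping disks used to form $P_0$) in a nonempty finite set of points. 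The algebraic intersection number is zero, but removing the geometric intersections is a Whitney-move problem, not a perturbation: for a degree-zero ``Whitehead-like'' pattern, for instance, the clasp forces $P^{\circ}$ to pierce $D_+\times\{y_i\}$. So your displayed decomposition of $P$ is correct as a set but does not exhibit a tube sum of disjoint spheres. The repair cannot be to locate the parallel copies inside the given position of $P$; one has to \emph{create} them by an isotopy of $P$ that swings each local sheet $D_-\times\{y_i\}$ around the $S^2$-factor (equivalently, introduces trivially-tubed trivial spheres that become parallel copies of the core after repositioning $\gamma$). That is what the paper does diagrammatically in Figure \ref{newsatellite}: the copies of the core appear after stabilizing the banded unlink diagram and isotoping $\gamma$, and the ball copy of $P$ is split from the parallel copies by construction.

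The step you flagged yourself --- that capping $P^{\circ}$ off on the $D_+$ side yields the same $2$-knot as capping on the $D_-$ side --- is the lesser problem and is resolvable roughly as you suggest, though it needs more than you state: push the $D_+$-caps across $\partial B_+$ into $W=\overline{S^4-B_+}$ (possible since they cross $\partial B_+$ away from $P^{\circ}$), check that $W$ is a $4$-ball in which both the pushed caps and the product disks $D_-\times\{y_i\}$ are trivial disk systems for the same unlink, and invoke uniqueness of trivial disk systems rel boundary. But this only becomes relevant once the disjointness issue above is repaired.
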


\begin{proof}
We only need to show that the pattern $(P,V)$ is a tube sum of a pattern $2$-knot $P \subset D^4 \subset V$ and parallel copies of the core $2$-sphere of $V$. Then these parallel copies of the core $2$-sphere becomes parallel copies of $C$ after identifying $V$ to $\nu(C)$, and $P$ becomes a $2$-knot in $D^4 \subset X$, hence, we proved the lemma. We draw a banded unlink diagram of $(P,V)$ as the right most figure of Figure \ref{newsatellite}. Then we can see that the surface represented by the right most figure is isotopic to the surface represented by the left most figure of Figure \ref{newsatellite}, which is a tube sum of $P \subset D^4 \subset V$ and parallel copies of the core of $V$.
\end{proof}

\begin{figure}
\centering
\includegraphics[width=\textwidth]{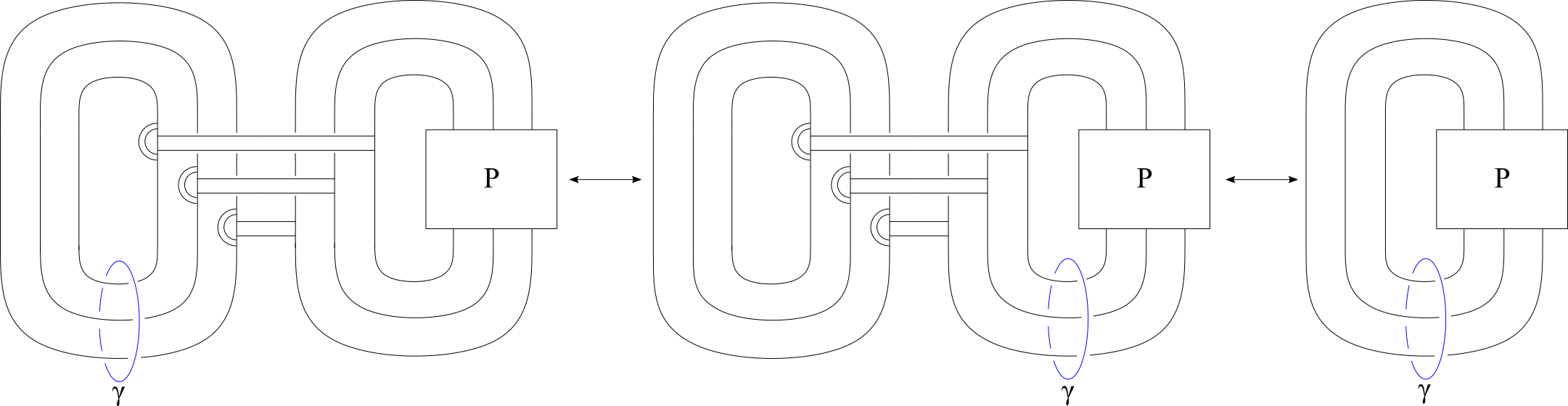}
\caption{Equivalent banded unlink diagrams of $(P,V)$. From the right most figure, we can stabilize sufficient number of times to get the middle figure, and then isotope $\gamma$ as in the right most figure.}
\label{newsatellite}
\end{figure}

\begin{remark}
This lemma is true for $n$-knot with $n \geq 1$, with suitable generalizations of the notions, such as the tube sum and parallel copies. 
\end{remark}
\smallskip
\begin{proof}[Second proof of Theorem \ref{main}]
In \cite{habiro2000gluck, Melvin}, it is shown that if a $2$-knot is obtained by tubing a $2$-link, then the Gluck twist along a $2$-knot is same as Gluck twist along a $2$-link.  
Hence, by \cite{habiro2000gluck, Melvin} and Lemma \ref{satellitelemma}, the Gluck twist of $X$ along $K$ is same as the Gluck twist of $X$ along a $2$-link, which is a split union of parallel copies of $C$ and $P \subset D^4 \subset X$. Consider a pair of $2$-spheres in parallel copies of $C$. We can tube them with a untwisted tube and isotope resulting $2$-sphere to a unknotted $2$-sphere as in Figure \ref{unknotting}. (There is another way to understand this isotopy. These two $2$-spheres cobound a $S^2 \times I$. Tubing these two with untwisted tubes will remove a $3$-ball $D^2 \times I$ from $S^2 \times I$, which makes the resulting $2$-sphere bounds a $3$-ball, so it is unknotted $2$-sphere in a $4$-ball.)

If $[P]$ is even, then we can pair them all to get a split union of unknotted $2$-spheres, and if $[P]$ is odd, then we can pair all $2$-spheres except one to get a split union of unknotted $2$-spheres and $C$. Then by \cite{habiro2000gluck, Melvin} again, the Gluck twist along a parallel copies of $C$ is is trivial if $[P]$ is even, and is same as the Gluck twist along $C$ if $[P]$ is odd. Therefore, the Gluck twist of $X$ along $K$ is diffeomorphic to the Gluck twist of $X$ along $P$ if the degree of $K$ is even, and the Gluck twist of $X$ along $K$ is diffeomorphic to the Gluck twist of $X$ along the split union of $C$ and $P$ if the degree of $K$ is odd. By \cite{habiro2000gluck, Melvin} again, the Gluck twist of $X$ along the split union of $C$ and $P$ is diffeomorphic to the Gluck twist of $X$ along $C \# P$.

\begin{figure}
\centering
\includegraphics[width=\textwidth]{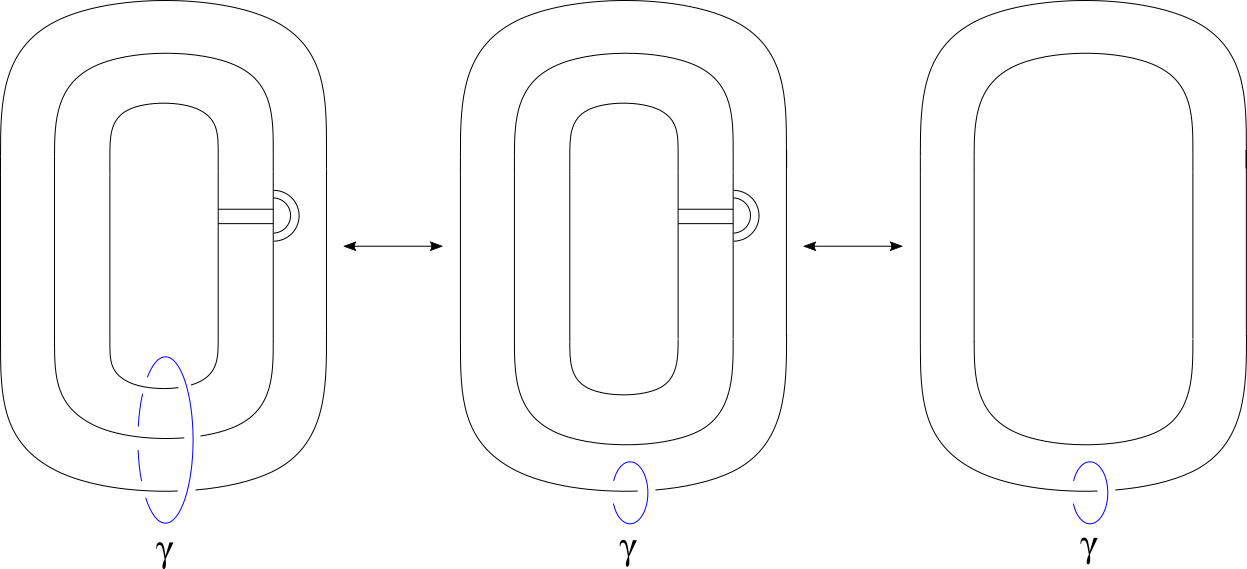}
\caption{We can pair adjacent $2$-spheres using a untwisted tube. Then we can isotope $\gamma$ through the tube, and isotope tubed pair to one unknotted $2$-sphere in a ball.}
\label{unknotting}
\end{figure}      
\end{proof}

\begin{lemma}\label{cableshape}
A cable $2$-knot can be obtained from parallel copies of a companion by connecting adjacent copies with half-twisted tubes. 
\end{lemma}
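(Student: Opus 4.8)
The plan is to deduce this from Lemma~\ref{satellitelemma}, exploiting the fact that for a cable $2$-knot the pattern $P$ is the unknot. By Lemma~\ref{satellitelemma}, a cable $2$-knot $K$ of degree $n$ with companion $C$ is a tube sum of the unknotted sphere $P\subset D^4\subset X$ with some number $m$ of parallel copies of $C$, and from the model in Figure~\ref{newsatellite}, after a further isotopy if necessary, we may assume the copies are linearly ordered and the tubes join consecutive ones. Since $P$ bounds a $3$-ball inside $D^4$, it can be absorbed into the adjacent copy $C_1$ exactly as in the parenthetical remark of the second proof of Theorem~\ref{main}: the tube sum of $C_1$ with an unknotted sphere in a ball is again isotopic to $C_1$, carrying its remaining tube along. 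Thus $K$ is exhibited as a chain $C_1,\dots,C_m$ of parallel copies of $C$ joined by tubes connecting adjacent copies, and by the remark following the definition of half-twisted each tube may be taken to be either untwisted or half-twisted, full twists being removable by local isotopies of the bands.

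Next I would cut down the number of copies until every tube is half-twisted, using the cancellation move already employed in the second proof. If $C_j$ and $C_{j+1}$ are joined by an untwisted tube, they cobound a copy of $S^2\times I$ inside $\nu(C)$ from which the tube removes a $3$-ball $D^2\times I$, so $C_j\cup(\text{tube})\cup C_{j+1}$ bounds a $3$-ball; hence $C_j\#C_{j+1}$ is isotopic to a single copy of $C$, with the tubes that were attached elsewhere to $C_j$ or $C_{j+1}$ carried along. Iterating, I cancel every untwisted tube and am left with a chain $C_1,\dots,C_{m'}$ of parallel copies of $C$ in which all tubes are half-twisted; here the key point is that a half-twisted tube joins its two copies so that they contribute to $H_2(\nu(C))\cong\Z$ with equal signs (whereas an untwisted tube makes them contribute with opposite signs, which is why untwisted tubes cancel). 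Since the degree is preserved under these isotopies and equals $n$, while a chain of $m'$ copies joined only by half-twisted tubes has degree $\pm m'$, we get $m'=\lvert n\rvert$, and after fixing orientations the chain has degree $n$. Therefore the degree-$n$ cable $2$-knot is obtained from $n$ parallel copies of $C$ by connecting adjacent copies with half-twisted tubes.

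The appeal to Lemma~\ref{satellitelemma} and the absorption of the unknotted pattern are routine; the step that requires care, and which I expect to be the main obstacle, is the twist bookkeeping in the second paragraph. One must check that cancelling an untwisted tube lying in the interior of a long chain (rather than at an end) does not introduce twists into the neighbouring tubes, and that the correspondence ``untwisted $\leftrightarrow$ opposite signs'' versus ``half-twisted $\leftrightarrow$ equal signs'' is exactly right, so that the final chain genuinely realizes degree $n$. Since only the parity of each tube's framing is at stake---full twists are always removable---this amounts to a finite check on banded unlink diagrams of the type appearing in Figures~\ref{newsatellite} and~\ref{unknotting}, carried out with the framing conventions used in the first proof of Theorem~\ref{main}.
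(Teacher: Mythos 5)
Your route is genuinely different from the paper's, and it contains one step that is wrong as written plus the bookkeeping that you yourself flag as unfinished. The wrong step: when $C_j$ and $C_{j+1}$ are joined by an untwisted tube, you correctly observe that $C_j\cup(\text{tube})\cup C_{j+1}$ bounds a $3$-ball, but then conclude that it ``is isotopic to a single copy of $C$.'' It is an \emph{unknotted} $2$-sphere (that is the whole point of Figure \ref{unknotting}); the cancellation therefore removes \emph{two} copies from the chain and concatenates the two neighbouring tubes into a single tube whose twist is the sum of the two old twists modulo full twists. In particular, cancelling an untwisted tube can turn two half-twisted neighbours into a new untwisted tube, so the process must be iterated, and the assertion that the surviving chain consists of $|n|$ copies joined only by half-twisted tubes rests entirely on the twist arithmetic you defer to your last paragraph. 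Your degree count ($m'=|n|$ because consecutive copies joined by an untwisted tube contribute opposite signs to $H_2(\nu(C))$ and hence cancel, while half-twisted tubes make them add) is the right idea and would recover the number of copies at the end, but as written the proof is incomplete: you have neither verified that the tube graph produced by Lemma \ref{satellitelemma} is a linear chain with consecutive attachments, nor carried out the concatenation bookkeeping.

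The paper sidesteps all of this with a classification argument. Since the pattern of a cable is an unknotted $S^2\subset S^4$, its complement has $\pi_1\cong\mathbb{Z}$, so the pattern is determined up to isotopy by the single integer $[\gamma]$. It then suffices to \emph{exhibit} one model realizing each integer: take $n$ parallel unknotted spheres joined by half-twisted tubes (Figure \ref{cable}), check that without $\gamma$ this banded unlink diagram is the unknot, and check that $\gamma$ links it $n$ times. No cancellation or twist bookkeeping is needed. If you want to keep your deformation-from-Lemma-\ref{satellitelemma} argument, the honest fix is to do the concatenation arithmetic explicitly on banded unlink diagrams; but the classification-by-$\pi_1$ shortcut is what makes the lemma a three-line proof.
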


\begin{proof}
We only need to show that the pattern can be obtained from parallel copies of unknotted 2-spheres by connecting adjacent copies with half-twisted tubes. Since $\pi_1 (S^4 - U) = \mathbb{Z}$, where $U$ is a unknot in $S^4$, the pattern of the cable is only determined by the $[\gamma] \in \pi_1 (S^4 - U) = \mathbb{Z}$. Let $[\gamma] = n, n \in \mathbb{Z}$. Consider a banded unlink diagram as in Figure \ref{cable}, which is obtained by taking $n$-parallel copies of unknotted 2-spheres and connecting them with half-twisted tubes. Note that without $\gamma$ this banded unlink diagram is just a different banded unlink diagram of an unknotted 2-sphere $U$. Then $\gamma$ links the banded unlink diagram $n$-times, which implies that $[\gamma] = n$. Hence, the satellite of the pattern represented by this banded unlink diagram is just a cable $2$-knot with degree $n$.   
\end{proof}


\begin{figure}
\includegraphics[width=0.3\textwidth]{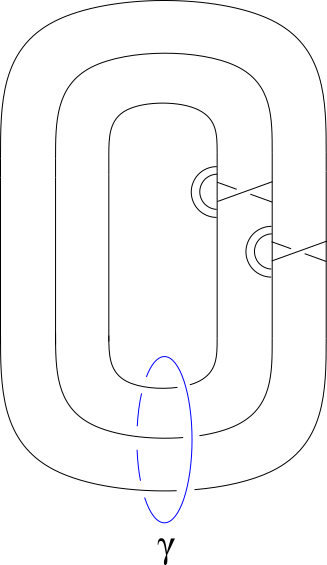}
\caption{A banded unlink diagram of a unknot pattern with $[\gamma] = 3 \in \mathbb{Z}$.}
\label{cable}
\end{figure}

%
%
%
\begin{theorem}\label{nonorientable}
Let $K$ be a cable $2$-knot in a $4$-manifold $X$ of companion $C$ with unknotted pattern $(P,V)$. Let $R$ be a standard projective plane in $D^4 \subset X$. Then, if the degree of $K$ is even, then $K\# R$ is isotopic to $R$ and if the degree is odd, then $K\#R$ is isotopic to $C\#R$.
\end{theorem}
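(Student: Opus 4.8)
The plan is to combine Lemma~\ref{cableshape} with the standard fact that a half-twisted tube connecting two parallel copies of a $2$-knot can be ``absorbed'' into a connected summand of a standard $\RP^2$. First I would invoke Lemma~\ref{cableshape} to present the cable $2$-knot $K$ (of degree $n$) as $n$ parallel copies $C_1,\dots,C_n$ of the companion $C$, with $C_i$ and $C_{i+1}$ joined by a half-twisted tube. Forming $K\#R$ then amounts to taking the disjoint union of this configuration with a standard $\RP^2$ sitting in a disjoint ball, and then tubing the $\RP^2$ to one of the spheres, say $C_1$. The key geometric move is that a half-twisted tube between two parallel $2$-spheres, together with a nearby standard $\RP^2$, can be traded: pushing one end of the half-twisted tube through the $\RP^2$ converts the half-twist into an untwisted tube at the cost of moving the $\RP^2$ summand one step down the chain. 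Diagrammatically this is a local move on banded unlink diagrams, entirely analogous to the untwisting move of Figure~\ref{unknotting} but using the nonorientable band of $R$ to cancel the half-twist.

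The core of the argument is an induction on $n$. First I would establish the base of the induction: a single half-twisted tube connecting $C_1$ and $C_2$, summed with a standard $\RP^2$, is isotopic to $(C_1 \# R)$ split from an \emph{unknotted} $2$-sphere (the former location of $C_2$, which now bounds a ball exactly as in the parenthetical remark after Figure~\ref{unknotting}: the two parallel spheres cobound $S^2\times I$, and the half-twisted tube plus the crosscap of $R$ together remove a ball). This uses that $C_1\#C_2$ along an untwisted tube is just $C$ again with an unknotted sphere split off, while the half-twist is exactly what a crosscap contributes. For the inductive step, I would apply this move to the bottom-most half-twisted tube (between $C_1$ and $C_2$), which absorbs it, frees $C_1$ as an unknotted split sphere, and leaves a degree-$(n-1)$ cable-type chain $C_2,\dots,C_n$ now carrying the $\RP^2$ summand, together with an unknotted split $2$-sphere. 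Unknotted split $2$-spheres can be deleted up to isotopy (they bound disjoint balls), so after $n-1$ steps we are left with a single copy $C_n$ summed with $R$. If $n$ is even we have absorbed an \emph{even} number of half-twists into $R$; two half-twists cancel (a full twist on a tube is removable by the remark after Figure~\ref{tubes}), equivalently the relevant band returns to orientable, so the $\RP^2$ can in fact be slid entirely off $C$ and what remains is $C \# R \# (\text{trivial})$ — but one more application of the base move with the \emph{last} sphere being unknotted (degree even means the chain closes up to an unknot pattern) collapses $C$ itself to an unknotted sphere, giving $K\#R \simeq R$. If $n$ is odd, one half-twist survives unabsorbed, $C_n$ does not collapse, and we obtain $K\#R \simeq C\#R$.

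More carefully, the cleanest bookkeeping is to track parity directly: $K\#R$ is (parallel copies of $C$ chained by half-twisted tubes) $\#\,R$, and each ``absorb a half-twisted tube into the crosscap'' move reduces the number of copies of $C$ by one and toggles nothing about $R$ except that after two such moves the accumulated twisting is trivial. So the number of copies of $C$ that genuinely remain, modulo the collapse to an unknotted sphere, is governed by $n \bmod 2$: when $n$ is even, the pattern without $\gamma$ is an unknotted sphere (as noted in the proof of Lemma~\ref{cableshape}) so the whole chain collapses and $K \# R \simeq R$; when $n$ is odd exactly one genuine copy of $C$ survives and $K\#R\simeq C\#R$.

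The main obstacle I expect is making the local ``half-twisted tube absorbed by a crosscap'' move rigorous on the level of banded unlink diagrams — specifically, verifying that pushing the half-twisted band through the nonorientable band of the standard $\RP^2$ (which is itself given by a small banded unlink diagram with a single half-twisted band) genuinely untwists it rather than producing an additional undetected twist or changing the isotopy class of $R$, and checking that the $\gamma$-isotopy (needed to keep the degree bookkeeping honest, exactly as in Figures~\ref{isotopy} and~\ref{unknotting}) can be carried out simultaneously. Everything else — the induction, deleting split unknotted spheres, the parity argument, and the final collapse of $C$ to an unknot when $n$ is even — is routine once that local move is in hand; I would present it as a sequence of labeled figures, each showing one step of the chain being consumed.
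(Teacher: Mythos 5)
Your overall strategy is the same as the paper's: present $K$ via Lemma \ref{cableshape} as parallel copies of $C$ chained by half-twisted tubes, use the crosscap of $R$ to untwist those tubes (the move of Figure \ref{untwisting}), then collapse adjacent copies joined by untwisted tubes, and conclude by parity. However, your key cancellation step is stated incorrectly, and the error is load-bearing. You assert that two parallel copies $C_1,C_2$ joined by an untwisted tube give ``$C$ again with an unknotted sphere split off,'' and correspondingly that each absorb move frees one copy of $C$ as a split unknot and shortens the chain by one. The correct statement --- the parenthetical remark in the second proof of Theorem \ref{main}, which you cite but misapply --- is that $C_1\cup(\text{untwisted tube})\cup C_2$ is a single \emph{unknotted} $2$-sphere: the two parallel copies cobound an $S^2\times I$, and the untwisted tube deletes a $D^2\times I$ from it, so the resulting sphere bounds a $3$-ball. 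The collapse therefore consumes copies of $C$ two at a time, and this is exactly where the dependence on $n \bmod 2$ comes from. With your version of the move the induction terminates at $C\#R$ for every $n\geq 1$, contradicting the even case: the degree-$2$ cable is genuinely knotted in general (by Theorem \ref{twistspunnotribbon} it need not even be ribbon), yet the theorem asserts its sum with $R$ is $R$, not $C\#R$.

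Your attempted patch for the even case does not repair this: ``one more application of the base move with the last sphere being unknotted'' has no meaning when only one sphere remains, and ``degree even means the chain closes up to an unknot pattern'' is not a correct dichotomy --- the pattern of a cable is unknotted for \emph{every} degree (that is the definition of a cable, as used in the proof of Lemma \ref{cableshape}), so it cannot distinguish even from odd. The fix is to follow the paper's order of operations: first use the crosscap of $R$ to untwist \emph{all} the half-twisted tubes (the half-twisted band survives each application of the move in Figure \ref{untwisting}, so it can be reused), then group the copies into adjacent pairs $(C_1,C_2),(C_3,C_4),\dots$, collapse each untwisted-tubed pair to an unknotted sphere in a small ball as in Figure \ref{unknotting}, and observe that the remaining tube sum of unknotted spheres (plus one leftover copy of $C$ when $n$ is odd) is an unknotted sphere (resp.\ a copy of $C$). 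Everything else in your write-up --- the identification of the local untwisting move, the reliance on Lemma \ref{cableshape}, and the final parity statement --- matches the paper.
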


\begin{proof}
The effect of connected summing $R$ in $K$ is just adding half-twisted band to a banded unlink diagram of $K$. We can put this half-twisted band near a half-twisted tube of a cable $2$-knot. Then by Figure \ref{untwisting}, we always can untwist the tube using $\mathbb{R}P^2$. Then by the similar argument as in the second proof of Theorem \ref{main}, each adjacent pair of $2$-spheres connected by a tube is isotoped to an unknotted $2$-sphere in a small ball in $X$. Hence, if the degree is even, then $K \# R$ can be isotoped to $R$, and if the degree is odd, then $K\#R$ is isotoped to $C\#R$. 
\end{proof}

\begin{figure}
\centering
\includegraphics[width=\textwidth]{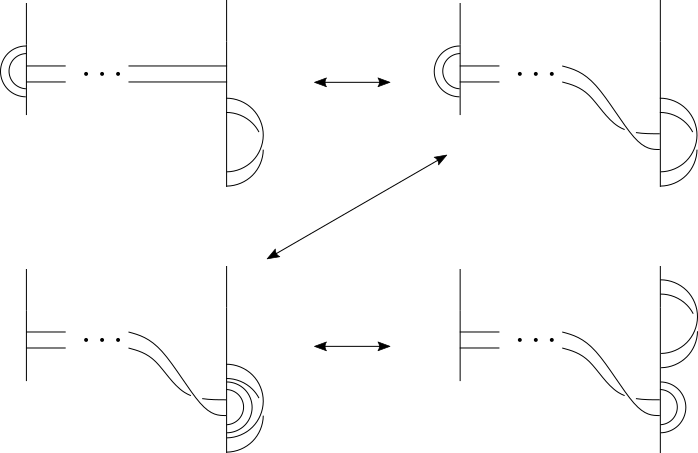}
\caption{Half-twisted tubes can be untwisted by a half-twisted band.}
\label{untwisting}
\end{figure}

Note that if a companion is ribbon $2$-knot, then its cable is ribbon. It is natural to expect that a cable is not ribbon if its companion is not ribbon. 

\begin{theorem}\label{twistspunnotribbon}
If a companion is a $n$-twist spun $2$-knot $\tau_n (k)$ of a $1$-knot $k$ with $|n| \geq 2$, then its cable is not a ribbon.
\end{theorem}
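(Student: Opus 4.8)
The plan is to argue by contradiction using the fundamental group of the complement together with the fiberedness of twist--spun $2$--knots. Write $G=\pi_1(S^4-\nu(K))$ and $G_C=\pi_1(S^4-\nu(C))$, and assume (as one always may for a cable $2$--knot) that the degree is nonzero. I would first unwind the cable structure: in
$$S^4-\nu(K)=\bigl(S^4-\nu(C)\bigr)\cup_{S^2\times S^1}\bigl(\nu(C)-\nu(K)\bigr),$$
the gluing torus $S^2\times S^1$ is $\pi_1$--injective into both sides --- into the companion exterior as a meridian of $C$ (of infinite order), and into the pattern exterior as an element mapping to a nonzero multiple of the generator of $H_1\cong\mathbb Z$. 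So van Kampen's theorem presents $G$ as an amalgamated free product over $\mathbb Z=\pi_1(S^2\times S^1)$, and in particular $G_C$ embeds in $G$.

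Next I would invoke Zeeman's theorem: since $|n|\ge2$, the exterior $S^4-\nu(C)$ fibers over $S^1$ with fiber the once--punctured $n$--fold cyclic branched cover $\Sigma_n(k)-B^3$ of $(S^3,k)$, with monodromy the covering translation; hence $[G_C,G_C]\cong\pi_1(\Sigma_n(k))$. Composing with $G_C\hookrightarrow G$ exhibits $\pi_1(\Sigma_n(k))$ as a subgroup of $[G,G]$. By the positive solution of the Smith conjecture, $\Sigma_n(k)=S^3$ forces $k$ to be trivial; so when $k$ is knotted, $\pi_1(\Sigma_n(k))$ is the nontrivial fundamental group of a closed orientable $3$--manifold --- it is non--free, and it is either finite (hence has torsion) or, when $\Sigma_n(k)$ is aspherical, a $3$--dimensional Poincar\'e duality group.

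The contradiction should come from the fact that the commutator subgroup of a \emph{ribbon} $2$--knot group cannot contain such a subgroup. If $K$ were ribbon, then $S^4-\nu(K)$ has a handle decomposition with only $0$--, $1$--, and $2$--handles, so its infinite cyclic cover is homotopy equivalent to a $2$--complex and $[G,G]$ is the fundamental group of a $2$--complex; I would use that for ribbon $2$--knots this group is moreover torsion--free of cohomological dimension at most $2$. Since cohomological dimension is monotone under subgroups, this is incompatible with containing $\pi_1(\Sigma_n(k))$ (infinite cohomological dimension if finite, and cohomological dimension $3$ if aspherical), so $K$ is not ribbon. The step I expect to be the real obstacle is precisely this last ingredient: torsion--freeness of ribbon $2$--knot groups is classical and already handles every companion whose branched cover has torsion in $\pi_1$ (for instance twist spins of torus knots with spherical $\Sigma_n$), but the aspherical case needs the stronger assertion that the infinite cyclic cover of a ribbon $2$--knot exterior is aspherical, equivalently $\mathrm{cd}\,[G,G]\le 2$; establishing that (or circumventing it by a direct argument on the cellular chain complex of that cover, using that its $2$--cells arise from ribbon fusion bands) is where I would concentrate. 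Combined with Theorem~\ref{nonorientable}, applying this to an infinite family of such companions --- whose cables are mutually non--isotopic because their fundamental groups recover the companion from the amalgam decomposition --- then yields Theorem~\ref{extensionviro}.
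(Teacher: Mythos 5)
Your route is genuinely different from the paper's. The paper stays entirely inside fibered--knot theory: by Zeeman the companion exterior fibers over $S^1$ with fiber the punctured branched cover $\Sigma_n(k)-\mathring{D}^3$, by Kanenobu the degree--$m$ cable is again fibered, with fiber the punctured $\#_m\Sigma_n(k)$, and then Cochran's theorem that a fibered ribbon $2$--knot must have fiber a punctured $\#_q S^1\times S^2$ combines with Plotnick's theorem (no $S^1\times S^2$ summands in cyclic branched covers of nontrivial knots) to finish. You instead extract $\pi_1(\Sigma_n(k))$ as a subgroup of $[G,G]$ via the amalgamated--product decomposition of the cable exterior; this is clean, does not need Kanenobu's fibration of the cable (only $\pi_1$--injectivity of the splitting $S^2\times S^1$, which you verify correctly for nonzero degree), and would apply verbatim to any fibered companion whose fiber group obstructs embedding into a group of cohomological dimension $2$.

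Two repairs are needed to close your argument. First, the ingredient you flag as ``the real obstacle'' is in fact classical: the exterior of a ribbon $2$--knot is aspherical (Yanagawa's theorem that $\pi_2=0$, combined with the fact that the exterior is homotopy equivalent to a complex built from handles of index $\le 2$, so its universal cover is a simply connected acyclic $2$--complex, hence contractible). Therefore $\mathrm{cd}\,G\le 2$ and every subgroup of $G$, in particular $[G,G]$ and anything it contains, is torsion--free of cohomological dimension $\le 2$. Second, your dichotomy ``finite or aspherical'' for $\Sigma_n(k)$ is not exhaustive: the branched cover may be reducible with infinite fundamental group. This is easily fixed: by the Kneser--Milnor decomposition, a closed orientable $3$--manifold whose group is torsion--free of cohomological dimension $\le 2$ must have free fundamental group, and Plotnick's theorem excludes the $S^1\times S^2$ summands that free factors $\mathbb{Z}$ would produce, so $\Sigma_n(k)$ would be simply connected and $k$ unknotted by the Smith conjecture. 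With these two points supplied, your proof is correct. (Note that, exactly as in the paper's own proof, one must tacitly assume $k$ is nontrivial and the cable has nonzero degree; otherwise the cable is ribbon and the statement fails as literally written.)
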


\begin{proof}
By Zeeman \cite{zeeman1965twisting}, any $\tau_n (k)$ is fibered by a once-punctured $n$-fold cyclic branched covering of $S^3$ over $k$. 

By Kanenobu \cite{kanenobu1983groups}, the degree $m$ cable of $\tau_n (k)$ is fibered by a once-punctured $m$-fold connect sum of the $n$-fold cyclic branched cover of $S^3$ over $k$.
%

To be a fibered ribbon $2$-knot, Cochran \cite{cochran1983ribbon} showed that its fiber should be a once punctured $\#_q S^1 \times S^2$, however, Plotnick \cite{plotnick1984finite} showed that a cyclic branched coverings of $S^3$ over a non-trivial knot does not admit $S^1 \times S^2$ summands. Hence, a cable of a $\tau_n(k)$ is not a ribbon $2$-knot. 
\end{proof}


\begin{proof}[Proof of Theorem \ref{extensionviro}]
Consider $\tau_2(k)$ where $k$ is a $2$-bridge knot, which its $2$-fold branched covering is a lens space $L(p,q)$. Then $2n$-cable of $\tau_2 (k)$ is fibered by $\#_{2n} L(p,q) - D^3$. Then $2n_i$-cable and $2n_j$-cable of $\tau_2 (k)$ are not isotopic if $n_i \neq n_j$ since the infinite cyclic cover of a fibered $2$-knot has the homotopy type of the fiber. Since by Theorem \ref{nonorientable} and \ref{twistspunnotribbon}, $2n$-cables of $\tau_2 (k)$ are non-ribbon $2$-knots which are unknotted by connected summing a single standard real projective plane, the set of $2n$-cables of $\tau_2 (k)$ is a desired set of $2$-knots.

\end{proof}

The following theorem is Theorem 0.1 in \cite{katanaga1999gluck}.
\begin{theorem}\cite{katanaga1999gluck}\label{price}
The Gluck twist of a $4$-manifold $X$ along a $2$-knot $K$ is diffeomorphic to the Price twist of $X$ along a $K \# R$ where $R \subset D^4 \subset X$ is a standard projective plane in $D^4$. 
\end{theorem}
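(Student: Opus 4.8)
The goal is to present the Price twist along $K\#R$ and the Gluck twist along $K$ as cut-and-paste operations on a single decomposition of $X$, and then to match the two regluing maps. I would start with the standard model. Since the normal Euler number is additive under connected sum along a trivial $2$-disk, $K\#R$ is an $\mathbb{RP}^2$ with the same normal Euler number $\pm2$ as $R$, so its tubular neighborhood $\nu(K\#R)$ is the twisted $D^2$-bundle $N$ over $\mathbb{RP}^2$ with that Euler number, whose boundary is the quaternionic space $Y=S^3/Q_8$; the Price twist is by definition $X$ with $N$ removed and reglued by a distinguished mapping class $\phi$ of $Y$. One also checks $\nu(K\#R)\cong\nu(R)$ abstractly: removing a small $4$-ball neighborhood of a disk $D\subset K$ turns $\nu(K)=S^2\times D^2$ into a $D^4$, and reassembling $\nu(K\#R)$ out of $\nu(K-D)$, the connecting-tube neighborhood, and $\nu(R-D')$ (a twisted $D^2$-bundle over a M\"obius band) produces the same manifold as reassembling $\nu(R)$ out of $\nu(R-D')$ and $\nu(D')$.

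Next I would compute the exterior. Tubing $K$ and $R$ along an arc $a$ gives, exactly as for the exterior of a connected sum of classical knots,
$$E_{K\#R}\;:=\;X-\mathring\nu(K\#R)\;\cong\;\big(X-\mathring\nu(K)\big)\;\cup_{D^2\times S^1}\;\big(S^4-\mathring\nu(R)\big),$$
glued along meridional solid tori in the boundary $3$-manifolds $S^2\times S^1$ and $Y$, coming from a regular neighborhood of where $a$ meets $K$ and $R$. Write $E_K=X-\mathring\nu(K)$ and $E_R=S^4-\mathring\nu(R)$. The Price twist is $E_{K\#R}\cup_\phi N$. I would reorganize this by pushing the effect of $\phi$ through the connecting tube onto the standard model: up to diffeomorphism, $\tau_{K\#R}(X)$ is $E_K$ glued, by a map whose restriction to $\partial E_K=S^2\times S^1$ remains to be determined, to a copy of the Price twist of the \emph{standard} pair $(S^4,R)$. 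The latter is diffeomorphic to $S^4$ again (indeed, taking $K$ to be an unknot makes $K\#R$ the standard $\mathbb{RP}^2$ and recovers $\tau_R(S^4)\cong S^4$), so the whole content of the theorem is which gluing map $S^2\times S^1\to\partial(\text{standard piece})$ one ends up with.

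The crux is that this gluing map is the Gluck map $\tau$. The mechanism is a tube slide, parallel to the first proof of Theorem \ref{main}: the Price-twisting region sits near $R$, the connecting tube of a connected sum with the standard $\mathbb{RP}^2$ carries a half-twist (it contributes precisely a half-twisted band to the banded unlink diagram, as in the proof of Theorem \ref{nonorientable}), and sliding the twisting region along the tube toward $K$ transmits the nontrivial action of $\phi$ on the meridian of the $\mathbb{RP}^2$, through the half-twist, to $\partial E_K$, where it becomes $\tau$. Concretely I would draw a handle (Kirby) diagram of the Price-twisted standard model with the marked meridional solid torus visible, and recognize it after isotopy as the ``$+1$-framed meridian'' picture used to describe the Gluck twist in the first proof of Theorem \ref{main}. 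Substituting back gives $\tau_{K\#R}(X)\cong E_K\cup_\tau(S^2\times D^2)$, the Gluck twist of $X$ along $K$.

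The main obstacle is exactly that last identification. One must know the mapping class group of $Y=S^3/Q_8$ --- it is the symmetric group $S_3$, induced by the outer automorphisms of $Q_8$ permuting $i,j,k$, and Price's three operations correspond to the cosets of the subgroup of mapping classes that extend over $N$ --- pin down which coset the Price twist $\phi$ occupies, and verify that under the diffeomorphism produced by the tube slide it is carried to the \emph{nontrivial} element of $\pi_0\mathrm{Diff}(S^2\times S^1)$ and not to the identity. This should reduce, as in the framing bookkeeping of the first proof of Theorem \ref{main}, to tracking a meridian and a longitude together with their framings through the half-twisted tube, and using that a single half-twist changes a framing by an odd amount, so that the transmitted monodromy is the Gluck twist rather than the trivial twist.
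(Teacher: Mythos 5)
First, note that the paper does not prove this statement at all: it is quoted verbatim as Theorem 0.1 of \cite{katanaga1999gluck}, so there is no in-paper proof to compare your write-up against; your proposal has to stand on its own as an argument.

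As it stands, it does not. Your setup is sound --- the normal Euler number computation, the identification $\partial\nu(K\# R)\cong S^3/Q_8$, and the decomposition of the exterior $E_{K\#R}\cong E_K\cup_{D^2\times S^1}E_R$ along a meridional solid torus are all correct, and this is essentially how the cited reference begins. But the entire content of the theorem is the step you yourself flag as ``the main obstacle'': showing that the Price regluing $\phi$ of $S^3/Q_8$, transported through the connecting tube, induces the \emph{nontrivial} element of $\pi_0\mathrm{Diff}(S^2\times S^1)$ on $\partial E_K$ rather than the identity. You assert that this ``should reduce to'' tracking a meridian, a longitude and their framings through the half-twisted tube, but you never carry out that reduction, never pin down which of Price's inequivalent regluings of $S^3/Q_8$ (cosets in $\pi_0\mathrm{Diff}(S^3/Q_8)\cong S_3$ of the classes extending over the disk bundle) the theorem refers to, and never verify that the ``tube slide'' is actually realized by a diffeomorphism of the glued-up manifold rather than being only a plausible picture. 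Without that, your argument shows only that the Price twist of $X$ along $K\#R$ is $E_K$ reglued by \emph{some} self-diffeomorphism of $S^2\times S^1$, which is equally consistent with the answer being $X$ itself. The heuristic that a single half-twist changes a framing by an odd amount is the right intuition --- it is the same mechanism that drives Theorem \ref{nonorientable} of this paper --- but turning it into a proof requires either the explicit handle/boundary calculus of \cite{katanaga1999gluck} or an equivalent computation in the mapping class groups of $S^3/Q_8$ and $S^2\times S^1$, neither of which appears in your write-up.
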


The following theorem is an immediate consequence of the above theorem.

\begin{corollary}\cite{katanaga1999gluck}\label{pricecor}
If a $2$-knot is unknotted by connected summing a standard projective plane, then its Gluck twist is trivial.
\end{corollary}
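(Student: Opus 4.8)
The plan is to deduce the corollary formally from Theorem \ref{price}, invoking it twice and using only the classical fact that the Gluck twist along the unknotted $2$-sphere is trivial.

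Suppose $K$ is a $2$-knot in a $4$-manifold $X$ that is unknotted by connected summing a standard projective plane, so that $K\#R$ is isotopic in $X$ to the standard projective plane $R\subset D^4\subset X$. First I would apply Theorem \ref{price} to $K$ itself: the Gluck twist of $X$ along $K$ is diffeomorphic to the Price twist of $X$ along $K\#R$. Since the Price twist of $X$ along a closed surface is carried out entirely within a tubular neighborhood of that surface, its diffeomorphism type depends only on the ambient isotopy class of the surface; hence the Price twist of $X$ along $K\#R$ is diffeomorphic to the Price twist of $X$ along the standard projective plane $R$.

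It then remains to show that the Price twist of $X$ along the standard $R\subset D^4\subset X$ recovers $X$, and here I would apply Theorem \ref{price} a second time, now to the unknotted $2$-sphere $U\subset D^4\subset X$. On the one hand, the Gluck twist of $X$ along $U$ is diffeomorphic to $X$, since $U$ bounds a $3$-ball and hence has trivial Gluck twist \cite{gluck1962embedding}. On the other hand, Theorem \ref{price} identifies this Gluck twist with the Price twist of $X$ along $U\#R$, and $U\#R$ is isotopic to the standard $R$. Chaining these identifications, the Gluck twist of $X$ along $K$ is diffeomorphic to the Price twist of $X$ along $R$, which is diffeomorphic to $X$; that is, the Gluck twist of $X$ along $K$ is trivial. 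Alternatively, one could invoke directly the computation, going back to Price, that the Price twist of a $4$-manifold along a standardly embedded projective plane contained in a ball recovers the manifold; but the route above keeps the argument self-contained given Theorem \ref{price}.

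I do not expect a genuine obstacle: the substantive input is Theorem \ref{price}, which we may assume. The only points needing a word of care are that the Price twist of $X$ along a surface depends only on that surface up to ambient isotopy, and that the operation called ``the Price twist'' in Theorem \ref{price} is the same in both of its invocations here; both are immediate from the way Theorem \ref{price} is formulated.
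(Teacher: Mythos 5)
Your argument is correct and is exactly the deduction the paper has in mind when it calls Corollary \ref{pricecor} an immediate consequence of Theorem \ref{price}; the paper supplies no further details. Your second application of Theorem \ref{price} to the unknotted $2$-sphere, which shows that the Price twist of $X$ along the standard $R \subset D^4 \subset X$ returns $X$ without invoking Price's original computation, cleanly closes the one step the paper leaves implicit.
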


\smallskip
\begin{proof}[Third Proof of Theorem \ref{main}]
Before going into the proof, we first define some terminology.

\begin{definition}
A $2$-knot $K'$ in $X \# \mathbb{C}P^2$ is called \emph{unit $2$-knot} if a pair $(K',X \# \mathbb{C}P^2)$ is obtained from a pairwise connected sum of the standard pair $(\mathbb{C}P^1, \mathbb{C}P^2)$ and a $2$-knot pair $(K, X)$. 
\end{definition}

\begin{lemma}\label{blowupisotopy}
Let $K'$ be unit $2$-knot obtained from a satellite $2$-knot $K$ with companion $C$ with pattern $(P,V)$. Then $K'$ is isotopic to a unit $2$-knot obtained from a connected sum of a pattern $2$-knot and a cable $2$-knot $\overline{C}$ with companion $C$.   
\end{lemma}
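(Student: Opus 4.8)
The plan is to argue entirely with banded unlink diagrams, combining Lemma~\ref{satellitelemma} (which rewrites $K$ as a tube sum), Lemma~\ref{cableshape} (which recognizes a cable from its tube picture), and the ``slide a curve through a $\pm1$-framed circle'' mechanism used in the first proof of Theorem~\ref{main}. First I would fix the banded unlink diagram for $K$ supplied by Lemma~\ref{satellitelemma}: the pattern $2$-knot $P$ sitting inside a ball $D^4\subset X$ and disjoint from every band (as arranged at the start of the first proof of Theorem~\ref{main}), tube summed to a chain $C_1,\dots,C_n$ of $n$ parallel copies of the companion $C$ with $n$ the degree of $K$, adjacent copies joined by the tubes that lemma produces.

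Next I would record the effect of passing to the unit $2$-knot $K'=K\#\mathbb{C}P^1$. Forming the pairwise connected sum with the standard pair $(\mathbb{C}P^1,\mathbb{C}P^2)$ enlarges the ambient Kirby diagram by a single $\pm1$-framed unknot that may be isotoped to a meridian of any minimum of the banded unlink diagram of $K$ --- exactly as the Gluck circle in the first proof of Theorem~\ref{main} is a meridian of a dotted circle --- and this provides a ``free half-twist'': pushing a band of the diagram over this circle (after first pushing it below the index-$2$ critical points, as in Figures~\ref{isotopy}--\ref{kirby}) changes the number of twists carried by that band by one, without altering the diffeomorphism type of the pair $(X\#\mathbb{C}P^2,K')$, and once the slides are finished the $\pm1$-framed circle can be returned to a ball disjoint from the rest of the diagram, where it records the standard $\mathbb{C}P^1\subset\mathbb{C}P^2$ summand. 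This is the same move played for the $\mathbb{R}P^2$-band in the proof of Theorem~\ref{nonorientable}, now with the twist absorbed by the $\mathbb{C}P^2$ rather than by non-orientability.

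With this available the lemma follows quickly. Using the $\pm1$-framed circle, I would change the parity of the twisting of each tube in the chain $C_1\!-\!\cdots\!-\!C_n$ until every one of these tubes is half-twisted; by Lemma~\ref{cableshape}, the resulting sub-diagram is a banded unlink diagram of a cable $2$-knot $\overline{C}$ with companion $C$, and comparing homology (or tracking the construction) shows its degree is again $n$. Because $P$ lies in $D^4$ and meets no band, the tube joining $P$ to the chain can be isotoped inside $D^4$ into a short untwisted connecting tube, so $P$ is a genuine connected summand. Hence $K'$ is isotopic to $(P\#\overline{C})\#\mathbb{C}P^1$, the unit $2$-knot on $P\#\overline{C}$, as claimed; it is this reduction that lets the third proof of Theorem~\ref{main} finish by applying Theorem~\ref{nonorientable} to the cable $\overline{C}$.

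The step I expect to be the real obstacle is the middle one: making precise the effect of the pairwise connected sum with $(\mathbb{C}P^1,\mathbb{C}P^2)$ on a banded unlink diagram, and verifying that the resulting twist-changing move is genuinely repeatable from tube to tube and leaves the $\mathbb{C}P^2$ summand standard, so that the output is again a unit $2$-knot and not some other surface in $X\#\mathbb{C}P^2$. I would establish this from the same local band-slide picture used for the Gluck circle in the first proof of Theorem~\ref{main} and for the $\mathbb{R}P^2$-band in Theorem~\ref{nonorientable}, applied one tube at a time; the remaining twist/framing bookkeeping --- which tubes actually need a change, and how the adjustment near $P$ interacts with pushing $P$ into $D^4$ --- should then be routine.
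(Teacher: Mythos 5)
Your overall architecture --- start from the tube-sum description of Lemma \ref{satellitelemma}, use the $+1$-framed circle coming from the $\mathbb{C}P^2$ summand to manipulate the tubes, and finish by recognizing the chain of parallel copies as a cable via Lemma \ref{cableshape} --- matches the paper's. But the move you put at the center of the argument does not work, and the step you dismiss as routine is where the dual sphere is actually needed. Sliding a band of a banded unlink diagram over a $+1$-framed $2$-handle changes that band's framing by one \emph{full} twist, not a half twist; by the remark following the definition of half-twisted tubes, a full twist is cancellable by a local isotopy, so this move cannot change the half-twist parity of a tube. Moreover, no isotopy can do what you ask: two adjacent parallel copies of $C$ joined by an untwisted tube cobound $(S^2\setminus \int{D_\epsilon})\times I\cong B^3$ and hence are null-homologous, while joined by a half-twisted tube they represent $\pm 2[C]$, so changing the parity of a tube changes the homology class of the surface in the local model $(S^2\times D^2)\#\mathbb{C}P^2$ and is not realizable by isotopy. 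The analogy with Theorem \ref{nonorientable} misleads you here: the $\mathbb{R}P^2$ summand supplies a genuinely half-twisted band (Figure \ref{untwisting}), whereas the $\mathbb{C}P^1$ supplies only a $+1$-framed circle, i.e., full twists. In the paper's proof no tube ever changes its twist parity; instead, the diagram is brought to the form of the right-hand side of Figure \ref{isotopecable}, the pairs of copies joined by untwisted tubes are cancelled because they cobound $3$-balls, and the surviving half-twisted chain is the cable of Lemma \ref{cableshape}.

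What the $\mathbb{C}P^1$ is actually for is the step you wave off as bookkeeping: the tubes produced by Lemma \ref{satellitelemma} are in general knotted and linked with $P$ and with one another, and there is no isotopy in $X$ alone bringing them to standard position (otherwise every satellite would already equal $P\#\overline{C}$ in $X$ and the theorem would be vacuous). The paper first performs band slides so that a single tube connects $P$ to the rest of the surface, and then uses the fact that the $\mathbb{C}P^1$ represented by the $+1$-framed circle is a geometric dual sphere to change crossings between the unlink components and the tubes, as in the figure reproduced from \cite{hughes2020isotopies}. This Norman-trick-style untangling is the genuine content of the lemma and is exactly what your proposal is missing.
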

\begin{proof}
Let $(U, W)$ be the pattern of $\overline{C}$. It is suffices to show that $(\mathbb{C}P^1, \mathbb{C}P^2)$ $\# (P,V)$ (see left figure of Figure \ref{isotopecable} for its banded unlink diagram) is isotopic to $(\mathbb{C}P^1, \mathbb{C}P^2) \# (U,W) \# (P, S^4)$ (see right figure of Figure \ref{isotopecable}). We need to show that these two figures represent isotopic surfaces. If we achieve this isotopy, we can cancel two $2$-spheres which are tubed by an untwisted tube, hence, cobound a $3$-ball, so that each adjacent $2$-spheres are tubed by half-twisted tubes. Then by Lemma \ref{cableshape}, we prove the lemma. 

To acheive the isotopy, we first do band slides so that only one tube is connecting $P$ to the rest of the surface. During the isotopy, tubes can be tangled with the diagram. Then as in \ref{HKM}, we can untangle tubes from $P$. After we untangle all such tubes, we move $+1$-framed circle to the left, and do further isotopy as before, to arrange every tube as we desired. 
 
\end{proof}
\begin{figure}
\centering
\includegraphics[width=\textwidth]{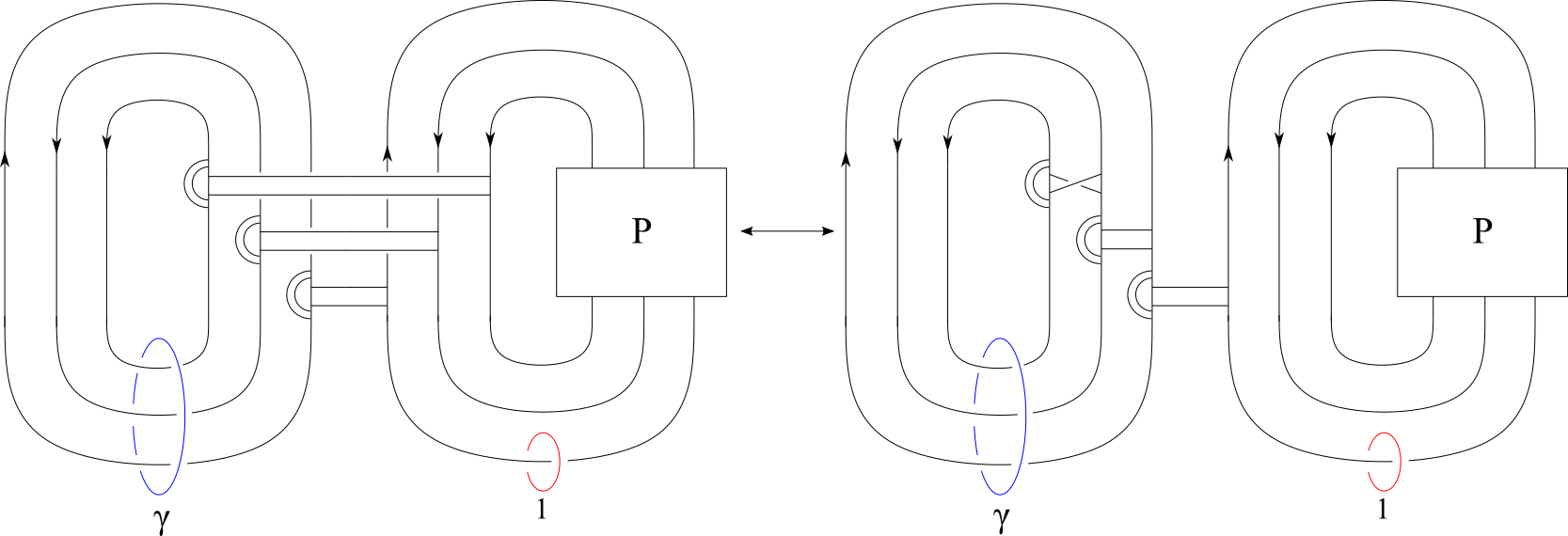}
\caption{Banded unlink diagrams of $(\mathbb{C}P^1, \mathbb{C}P^2) \# (P,V)$(left) and $(\mathbb{C}P^1, \mathbb{C}P^2) \# (U,W) \# (P, S^4)$(right). Note two parallel $2$-spheres connected by a untwisted tube can be isotoped to a unknotted $2$-sphere as in Figure \ref{unknotting} and cancelled. }
\label{isotopecable}
\end{figure}

\begin{figure}
\center
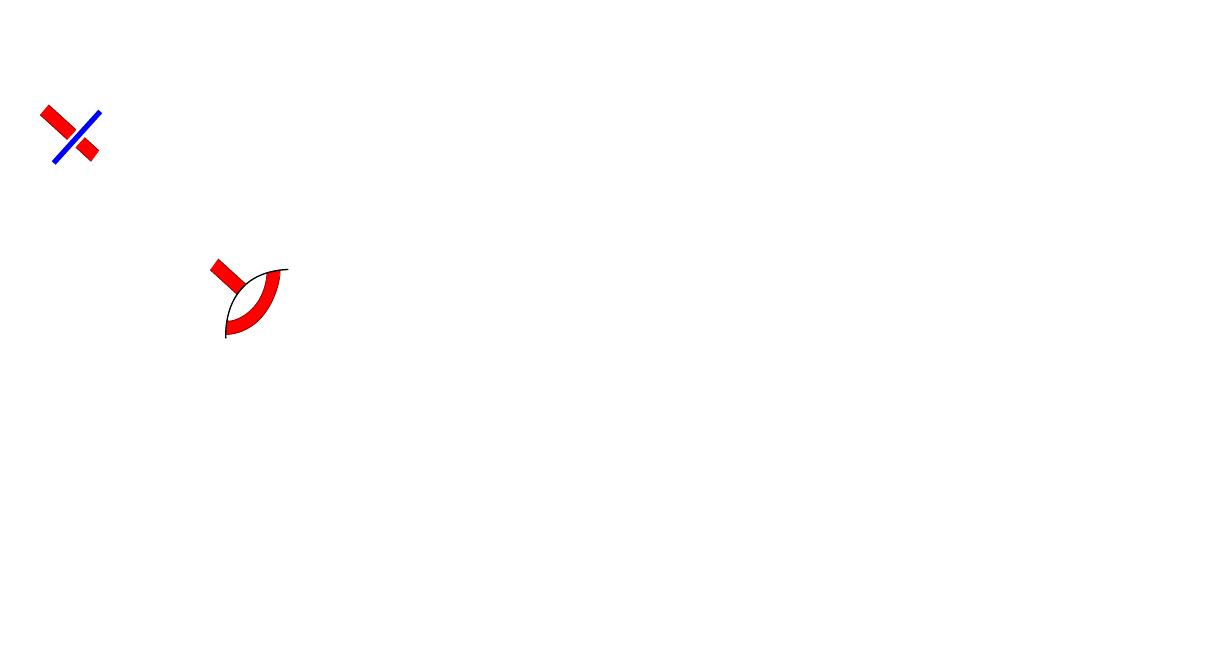
\label{HKM}
\caption{The figure from \cite{hughes2020isotopies}. The pair $v_i$ and $v'_i$ in the figure represents the tube. We always can change crossings between bands and tubes/2-handles. Also, if we have geometric dual sphere (in our case, $\mathbb{C}P^1$ represented by $+1$-framed circle), then we can change crossings between unlinks and tubes.} 
\end{figure}

In \cite{Melvin}, Melvin showed that the two $2$-knots have the same Gluck twist if unit $2$-knots obtained from them are isotopic. Then by Lemma \ref{blowupisotopy}, the Gluck twist of $X$ along $K$ is diffeomorphic to the Gluck twist of the connected sum of $\overline{C}$ and $P \subset D^4 \subset X$.
By Theorem \ref{price}, the Gluck twist of $X$ along $\overline{C} \# P$ is diffeomorphic to the Price twist of $X$ along $\overline{C} \# P \# R$, where $R$ is a standard projective plane in $D^4 \subset X$. Then by Theorem \ref{nonorientable}, if the degree of $K$ is even, $K \# R$ is isotopic to $P \# R$, and if the degree is odd, then $K \# R$ is isotopic to $C \# P \# R$. Hence, the Gluck twist of $X$ along $K$ is diffeomorphic to the Gluck twist of $X$ along $P$ if the degree is even, otherwise, it is diffeomorphic to the Gluck twist of $X$ along $C \# P$.
\end{proof}

\bibliographystyle{amsplain}
\bibliography{ref}

\providecommand{\bysame}{\leavevmode\hbox to3em{\hrulefill}\thinspace}
\providecommand{\MR}{\relax\ifhmode\unskip\space\fi MR }
\providecommand{\MRhref}[2]{%
  \href{http://www.ams.org/mathscinet-getitem?mr=#1}{#2}
}
\providecommand{\href}[2]{#2}
\begin{thebibliography}{10}

\bibitem{cochran1983ribbon}
Tim Cochran, \emph{Ribbon knots in {$S^4$}}, Journal of the London Mathematical
  Society \textbf{2} (1983), no.~3, 563--576.

\bibitem{gluck1962embedding}
Herman Gluck, \emph{The embedding of two-spheres in the four-sphere},
  Transactions of the American Mathematical Society \textbf{104} (1962), no.~2,
  308--333.

\bibitem{gordon1976foursphere}
Cameron~{\relax McA}. Gordon, \emph{Knots in the 4-sphere}, Commentarii
  Mathematici Helvetici \textbf{51} (1976), 585--596.

\bibitem{habiro2000gluck}
Kazuo Habiro, Yoshihiko Marumoto, and Yuichi Yamada, \emph{Gluck surgery and
  framed links in 4-manifolds}, Knots In Hellas' 98, World Scientific, 2000,
  pp.~80--93.

\bibitem{hughes2020isotopies}
Mark~C Hughes, Seungwon Kim, and Maggie Miller, \emph{Isotopies of surfaces in
  4--manifolds via banded unlink diagrams}, Geometry \& Topology \textbf{24}
  (2020), no.~3, 1519--1569.

\bibitem{kanenobu1983groups}
Taizo Kanenobu, \emph{Groups of higher dimensional satellite knots}, Journal of
  Pure and Applied Algebra \textbf{28} (1983), no.~2, 179--188.

\bibitem{katanaga1999gluck}
Atsuko Katanaga, Osamu Saeki, Masakazu Teragaito, Yuichi Yamada, et~al.,
  \emph{Gluck surgery along a 2-sphere in a 4-manifold is realized by surgery
  along a projective plane.}, The Michigan Mathematical Journal \textbf{46}
  (1999), no.~3, 555--571.

\bibitem{kirbyproblems}
Rob Kirby, \emph{Problems in low-dimensional topology}, Proceedings of Georgia
  Topology Conference, Part 2, Citeseer, 1995.

\bibitem{Melvin}
Paul~Michael Melvin, \emph{B{LOWING} {UP} {AND} {DOWN} {IN} 4-{MANIFOLDS}},
  ProQuest LLC, Ann Arbor, MI, 1977, Thesis (Ph.D.)--University of California,
  Berkeley. \MR{2627246}

\bibitem{NashStipsicz}
Daniel Nash and Andr\'{a}s~I. Stipsicz, \emph{Gluck twist on a certain family
  of 2-knots}, Michigan Math. J. \textbf{61} (2012), no.~4, 703--713.
  \MR{3049286}

\bibitem{naylor2020gluck}
Patrick Naylor and Hannah Schwartz, \emph{Gluck twisting roll spun knots},
  arXiv e-prints (2020), arXiv:2009.05703.

\bibitem{paononlinear}
Peter~Sie Pao, \emph{Nonlinear circle actions on the {$4$}-sphere and twisting
  spun knots}, Topology \textbf{17} (1978), no.~3, 291--296. \MR{508892}

\bibitem{plotnick1984finite}
Steven~P Plotnick, \emph{Finite group actions and nonseparating 2-spheres},
  Proceedings of the American Mathematical Society \textbf{90} (1984), no.~3,
  430--432.

\bibitem{Sunukjianconcordanceisotopysurgery}
Nathan~S. Sunukjian, \emph{Surfaces in 4-manifolds: concordance, isotopy, and
  surgery}, Int. Math. Res. Not. IMRN (2015), no.~17, 7950--7978. \MR{3404006}

\bibitem{viro1973local}
O~Ja Viro, \emph{Local knotting of submanifolds}, Mathematics of the
  USSR-Sbornik \textbf{19} (1973), no.~2, 166.

\bibitem{yanagawa1969ribbon}
Takaaki Yanagawa, \emph{On ribbon 2-knots}, Osaka Journal of Mathematics
  \textbf{6} (1969), 446--464.

\bibitem{zeeman1965twisting}
E~Christopher Zeeman, \emph{Twisting spun knots}, Transactions of the American
  Mathematical Society \textbf{115} (1965), 471--495.

\end{thebibliography}
\end{document}